\newtheorem{fact}{Fact}[section]
\newtheorem{theorem}[fact]{Theorem}
\newtheorem{corollary}[fact]{Corollary}
\newtheorem{definition}[fact]{Definition}
\newtheorem{lemma}[fact]{Lemma}
\newtheorem{remark}[fact]{Remark}
\newenvironment{proof}[1][Proof]{\textbf{#1.} }{\ \rule{0.5em}{0.5em}}
\numberwithin{equation}{section}
\newcommand{\weg}{{\hexagon}}
\newcommand{\n}{{\mathcal N}}
\newcommand{\im}{{\rm Im}}
\newcommand{\pres}{{\rm pres}}
\newcommand{\sym}{{\rm sym}}
\newcommand{\id}{{\rm Id}}
\newcommand{\dist}{{\rm dist}}
\newcommand{\conv}{\operatorname{conv\, hull}}
\newcommand{\defects}{\operatorname{defects}}
\newcommand{\SO}{\operatorname{SO}}
\newcommand{\OmegaRepr}{\Omega^*}
\def\R{{\mathbb R}}  %%
\def\N{{\mathbb N}}  %%
\def\Z{{\mathbb Z}}  %%
\def\C{{\mathbb C}}  %%
\def\T{{\mathcal{T}}}
\newcommand{\cdrei}{c_{2}}
\newcommand{\cvier}{c_{3}}
\newcommand{\cfuenf}{c_{4}}
\newcommand{\csechs}{c_{5}}
\newcommand{\csieben}{c_{6}}
\newcommand{\cacht}{c_7}
\newcommand{\cneun}{m_1}
\newcommand{\czehn}{c_{8}}
\newcommand{\celf}{{c_{1}}}
\newcommand{\czwoelf}{c_{10}}
\newcommand{\cdreizehn}{c_{11}}
\newcommand{\cvierzehn}{c_{12}}
\newcommand{\cfuenfzehn}{c_{13}}
\newcommand{\csechzehn}{c_{14}}
\newcommand{\csiebzehn}{c_{9}}
\begin{document}
\thispagestyle{empty}

\begin{center}
{\LARGE Spontaneous breaking of rotational symmetry \\
in the presence of defects}\\[3mm]
{\large Markus Heydenreich\footnote{Mathematisch Instituut, Universiteit Leiden, P.O.~Box 9512, 2300~RA Leiden, The Netherlands; Centrum Wiskunde \& Informatica (CWI), P.O.~Box 94070, 1090~GB Amsterdam, The Netherlands; email: markus@math.leidenuniv.nl} \hspace{1cm} 
Franz Merkl\footnote{Mathematical Institute, University of Munich,
Theresienstr.\ 39,
D-80333 Munich,
Germany.
e-mail: merkl@mathematik.uni-muenchen.de
}
\hspace{1cm} 
Silke W.W.\ Rolles\footnote{Zentrum Mathematik, Bereich M5,
Technische Universit{\"{a}}t M{\"{u}}nchen,
D-85747 Garching bei M{\"{u}}nchen,
Germany.
e-mail: srolles@ma.tum.de}
\\[3mm]
{\small \today}}\\[3mm]
\end{center}

\begin{abstract}
We prove a strong form of spontaneous breaking of rotational symmetry
for a simple model of two-dimensional crystals with random defects 
in thermal equilibrium  at low temperature. 
The defects consist of isolated missing atoms. 
\end{abstract}

%\tableofcontents

\vskip2em
\paragraph{Key words.}
Spontaneous symmetry breaking, localized defects, rigidity estimate. 

\paragraph{MSC 2010.} 
Primary 60K35; Secondary 82B20, 82B21.

\section{Introduction}

\subsection{Motivation}

Solid state physics is about crystals. In spite of the 
tremendous achievements and numerous applications of solid state physics,
existence of crystals is mathematically not rigorously understood.
In particular, understanding the melting transition from crystals to liquids 
seems out of reach for mathematicians. 

One gets a better understanding of crystallization phenomena (classical 
and liquid crystals) by studying spontaneous 
breaking or preservation of spatial symmetries like translations and 
rotations. Preservation of translational symmetry is well understood in 
two dimensions, see for example \cite{Richthammer2007}. Hardly any 
mathematical results in realistic models are known in three dimensions. 
Among the more recent results on translational symmetry breaking in 
crystalline systems, 
we mention Aizenman, Jansen, and Jung \cite{AizenmanJansenJung2010}.

By the work of Theil \cite{Theil06}, crystallization at temperature zero in 
two dimensions is much better understood than crystals at positive 
temperature. On a macroscopic scale, geometric rigidity is well understood.
This starts with a result of Liouville. Consider 
a continuously differentiable map such that the derivative at any point is 
a rotation. By Liouville's result it is indeed globally a rotation. 
Among the more recent advances, we highlight Friesecke, James, and M{\"u}ller 
\cite{friesecke-james-mueller2002} proving a geometric rigidity result. 

In \cite{MerklRolles2009}, spontaneous breaking of rotational 
symmetry was shown for a toy model of a crystal without defects. 
However, crystals at positive temperature exhibit defects. These can
be all kinds of local defects (e.g.\ missing atoms) and various non-local 
defects. In this work, we consider a variant of the model from 
\cite{MerklRolles2009} which allows the simplest type of local defects, 
isolated missing single atoms. 
Our approach can be generalized in a straightforward way to isolated islands 
of missing atoms as long as the islands are of bounded size. 
The model forbids non-local defects like crystal boundaries and
dislocation lines by definition. Furthermore, to make the presentation as 
simple as possible, we work in two dimensions although this is not 
essential. We see the current work as one step towards
a better mathematical understanding of rotational symmetry breaking in 
crystals. 
The presence of defects makes a Fourier analysis technique inappropriate 
for our model. It is replaced by the geometric rigidity result from 
\cite{friesecke-james-mueller2002}, which therefore  
is an important ingredient in our analysis.

\subsection{The model}
\label{section1}

\paragraph{Assumptions.}
Throughout, we fix 
\begin{enumerate}
\item a real-valued potential function $V$ defined in an open 
interval containing $1$. We assume that $V$ is twice continuously 
differentiable with $V''>0$ and $V'(1)=0$. 
\item $\alpha>0$ sufficiently small, depending on $V$.
(More specifically, $\alpha$ needs to be so small that $V$ is 
defined on $[1-\alpha,1+\alpha]$ and Corollary 
\ref{cor-single-triangle} below holds.)
\item $l\in(1-\alpha/2,1+\alpha/2)$. This parameter equals the distance of 
neighboring particles in the standard configuration defined in \eqref{omega-l}
below. Thus, it is a control parameter for the ``pressure'' of the system. 
\end{enumerate}

Let $(A_2,E)$ denote the triangular lattice, viewed as an undirected graph: 
$A_2=\Z+\tau\Z$ with $\tau=e^{\pi i/3}$ and 
$E=\{\{x,y\}:x,y\in A_2,|x-y|=1\}$; here $|z|$ denotes
the Euclidean length of $z\in\C$. We write $x\sim y$ if 
$\{x,y\}\in E$. 

Let $N\in\N$. We define the set $\OmegaRepr_{l,N}$ of configurations 
$\omega$ with periodic boundary conditions to consist of all 
$\omega\in(\C\cup\{\weg\})^{A_2}$ such that 
\begin{align}
\label{def-OmegaRepr}
\omega(x+Nz)=\omega(x)+lNz \text{ for all }x,z\in A_2
\text{ with }\omega(x)\neq\weg,
\end{align}
and $\omega(x+Nz)=\omega(x)$ for $x,z\in A_2$ with $\omega(x)=\weg$.
For $x\in A_2$, $\omega(x)\in\C$ is interpreted as the location of the 
particle with index $x$. If $\omega(x)=\weg$, then there is a 
{\it hole} or a {\it defect} associated with $x$. 
Note that any $\omega\in\OmegaRepr_{l,N}$ is uniquely determined by its restriction to the set of representatives 
\begin{align}
& I_N:=
\{x+\tau^2 y:x,y\in\{0,\ldots,N-1\}\}
\end{align}
of $A_2/NA_2$.  
This allows us to identify $\OmegaRepr_{l,N}$ with $(\C\cup\{\weg\})^{I_N}$. 
Furthermore, two configurations $\omega,\omega'\in\OmegaRepr_{l,N}$ are identified if there exists $z\in A_2$ such that for all $x\in A_2$ one has $\omega(x)=\omega'(x+Nz)$. 
Let $\underline\Omega_{l,N}$ be the quotient space with respect to the 
equivalence relation given by this identification. 
One may identify $\underline\Omega_{l,N}$ with a measurable set of 
representatives $\underline\Omega_{l,N}\subset\OmegaRepr_{l,N}$.

We introduce the set 
\begin{align}
& \Lambda_{lN}:=[0,lN)+\tau^2 [0,lN)
\end{align}
of representatives for $\C/lNA_2$.
Although the precise choice of the set of representatives for 
$\underline\Omega_{l,N}$ in $\OmegaRepr_{l,N}$ is 
irrelevant, a possible choice is $\omega(x)\in\Lambda_{lN}$ for the 
lexicographically smallest $x\in I_N$ with $\omega(x)\neq\weg$ 
if $\omega$ is not the constant configuration with value $\weg$.

Let 
\begin{align}
\defects(\omega):=\omega^{-1}(\{\weg\})\cap I_N
\end{align}
denote the set of defects in the configuration $\omega$. 
For $x\in I_N$ and $z\in\{1,\tau\}$, let 
\begin{align}
\Delta_{x,z}:=\{x+sz+t\tau z:s,t>0,s+t<1\} 
\end{align}
denote the open triangle with corner points $x$, $x+z$, and $x+\tau z$. 
Let 
\begin{align}
\T_N:=\{\Delta_{x,z}:x\in I_N,z\in\{1,\tau\}\}
\quad\text{and}\quad
\T:=\{\Delta_{x,z}:x\in A_2,z\in\{1,\tau\}\}.
\end{align}
Note that the closures of the triangles in $\T_N$ cover $\Lambda_{1N}$.
Let 
\begin{align}
\n:=\{\tau^j:j\in\Z\}
\end{align}
denote the set of neighbors of $0$ in $A_2$.

The space $\Omega_{l,N}$ of {\it allowed} configurations consists of all 
$\omega\in\underline\Omega_{l,N}$ satisfying the following properties
($\Omega 1$)--($\Omega 4$): 
\begin{enumerate}
\item[($\Omega 1$)] $|\omega(x)-\omega(y)|\in(1-\alpha,1+\alpha)$ 
for all $x,y\in A_2$ with $x\sim y$, 
$\omega(x)\neq\weg$, and $\omega(y)\neq\weg$. 
\item[($\Omega 2$)] Defects are isolated in the following sense.
For all $x,y\in A_2$, one can have 
$\omega(x)=\weg$ and $\omega(y)=\weg$ only if $x=y$ or $|x-y|>2$. 
This means that nearest and next-nearest neighbors of defects 
are present. 
\end{enumerate}
For $x\in A_2$, let 
\begin{align}
\hat\omega(x)=\left\{\begin{array}{ll}
\omega(x) & \text{ if }\omega(x)\neq\weg, \\
\frac16\sum_{z\in\n}\omega(x+z) & \text{ if }\omega(x)=\weg. 
\end{array}\right. 
\end{align}
Extend $\hat\omega$ piecewice affine linearly to a map 
$\hat\omega\colon\C\to\C$ requiring that $\hat\omega$
is affine linear on the closure of every triangle in $\T$.
We require:
\begin{enumerate}
\item[($\Omega 3$)] $\hat\omega\colon\C\to\C$ is bijective.
\item[($\Omega 4$)] For all $x\in A_2$ and all $z\in\n$, one has 
\begin{align}
\im\left(
\frac{\hat\omega(x+\tau z)-\hat\omega(x)}{\hat\omega(x+z)-\hat\omega(x)}
\right)>0.
\end{align} 
In other words, $\hat\omega$ preserves orientations. 
\end{enumerate}
We remark that we could drop condition ($\Omega 4$) because it follows
from the other conditions ($\Omega 1$)--($\Omega 3$). Since the proof 
of this fact 
is more analytic than stochastic and is not needed in the current paper, 
we skip it. 

Note that the {\it standard configuration} 
\begin{align}
\label{omega-l}
\omega_l\colon A_2\to\C,v\mapsto lv 
\end{align}
is an allowed configuration. 
Thus, $\Omega_{l,N}\neq\emptyset$. 

Let $m\in\R$; $m$ has the interpretation of a {\it chemical potential}. 
It parametrizes the energetic costs of a defect. 
Define the Hamiltonian 
\begin{align}
H_{m,N}(\omega):=\frac12\sum_{x\in I_N}\sum_{y\in A_2:y\sim x}
1_{\{\omega(x)\neq\weg,\omega(y)\neq\weg\}}V(|\omega(x)-\omega(y)|)
+m\sum_{x\in I_N}1_{\{\omega(x)=\weg\}}
\end{align}
for $\omega\in\Omega_{l,N}$. 

Let $\lambda$ denote the Lebesgue measure on $\C$. 
We endow $(\C\cup\{\weg\})^{I_N}$ with the reference measure 
$(\lambda+\delta_\weg)^{I_N}$. 
This yields a reference measure on $\OmegaRepr_{l,N}$. 
Restricting this reference measure to $\underline\Omega_{l,N}$ and using 
the above identification, 
this defines in turn a reference measure $\mu_N$ on $\underline\Omega_{l,N}$. 
Note that $\mu_N(\Omega_{l,N})<\infty$ as a consequence of $(\Omega 1)$.

For $\beta>0$, let 
\begin{align}
P_{\beta,m,N}(d\omega)
:=\frac{1}{Z_{\beta,m,N}}e^{-\beta H_{m,N}(\omega)}\,\mu_N(d\omega)
\end{align}
with partition sum 
\begin{align}
Z_{\beta,m,N}:=\int_{\Omega_{l,N}}e^{-\beta H_{m,N}(\omega)}\,\mu_N(d\omega). 
\end{align}
Clearly, $P_{\beta,m,N}$ and $Z_{\beta,m,N}$ depend also on $\alpha$, $l$, 
and $V$. Usually, we suppress these parameters in the notation. 
Since $V$ is bounded on $[1-\alpha,1+\alpha]$ and $\mu_N(\Omega_{l,N})<\infty$, 
it follows that $Z_{\beta,m,N}<\infty$. 
Lemma \ref{lemma-lower-bound-Z} below shows that 
$Z_{\beta,m,N}>0$ holds as well.

\subsection{Results}

We remark that under the assumptions stated at the beginning of 
Section \ref{section1}, for all $\beta>0$, $m\in\R$, 
$N\in\N$ with $N\ge 5$, $x\in A_2$, and $z\in\n$, one has 
\begin{align}
\label{expec-lz}
E_{P_{\beta,m,N}}[\hat\omega(x+z)-\hat\omega(x)]=lz.
\end{align}
This follows from \eqref{def-OmegaRepr} together with the translational 
invariance of $P_{\beta,m,N}$.
In particular, under $P_{\beta,m,N}$, the distribution of 
$\hat\omega(x+z)-\hat\omega(x)$ is not rotationally invariant. 
Note that $|\hat\omega(x+z)-\hat\omega(x)|$ is bounded uniformly in $N$, 
and thus, equation \eqref{expec-lz} remains true when one takes 
subsequential weak limits as $N\to\infty$. 
As a consequence, any infinite volume Gibbs measure obtained 
as such a subsequential limit is not rotationally invariant. 

We prove a much stronger form of breaking of rotational symmetry. 

\begin{theorem}
\label{main-thm}
Under the assumptions stated at the beginning of Section \ref{section1}, 
there is a constant $m_0=m_0(V)$, such that the following holds:
\begin{align}
\label{claim-main-thm}
\lim_{\beta\to\infty}\sup_{N\ge 5}\sup_{m\ge m_0} \sup_{x\in A_2} \sup_{z\in\n}
E_{P_{\beta,m,N}}
[|\hat\omega(x+z)-\hat\omega(x)-lz|^2 ]=0. 
\end{align}
\end{theorem}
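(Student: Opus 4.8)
The plan is to show that at low temperature the configuration under $P_{\beta,m,N}$ is, with overwhelming probability, close in a suitable sense to a rotated copy of the standard configuration $\omega_l$, and then to combine this with the expectation identity \eqref{expec-lz} to force the rotation to be trivial in the mean-square sense. The starting point is an energy estimate: by a bulk lower bound on $Z_{\beta,m,N}$ (Lemma~\ref{lemma-lower-bound-Z}, referenced above) and the fact that every edge contributes $V(|\cdot|)\geq V(1)+c\,(|\cdot|-1)^2$ for some $c>0$ (from $V''>0$, convexity, and $V'(1)=0$, valid on $[1-\alpha,1+\alpha]$), the Gibbs weight controls $\sum_{x\sim y}(|\hat\omega(x)-\hat\omega(y)|-1)^2$ plus a multiple of the number of defects. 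Taking $m\geq m_0$ large enough makes each defect expensive, so typically the defect density is tiny; and the quadratic bound shows that $\hat\omega$ is typically a near-isometry on most triangles of $\T_N$, i.e. the discrete deformation gradient is close to $\SO(2)$ on most of $\Lambda_{1N}$.

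Next I would invoke the geometric rigidity estimate of Friesecke--James--M\"uller \cite{friesecke-james-mueller2002}: if $\nabla\hat\omega$ is $L^2$-close to the set of rotations $\SO(2)$ on a domain (here a fundamental domain of the torus, or better, I would pass through each triangle and patch), then there is a \emph{single} rotation $R$ such that $\nabla\hat\omega$ is $L^2$-close to $R$, with a constant depending only on the domain shape (scale-invariant), not on $N$. The defects are handled because they are isolated (condition $(\Omega 2)$): around each hole one has a bounded-size neighborhood, $\hat\omega$ is the affine interpolation of the six present neighbors of the missing atom, and one checks via Corollary~\ref{cor-single-triangle} that the affine pieces near a hole still have gradient controlled by the nearby edge energies; alternatively one excises small disks around defects, applies rigidity on the perforated domain, and uses that the perforation has small volume fraction. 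The upshot is: there exists a rotation $R=R(\omega)\in\SO(2)$ with
\begin{align}
\label{plan-rigidity}
E_{P_{\beta,m,N}}\Bigl[\;\frac{1}{|A_2\cap\Lambda_{1N}|}\sum_{\substack{x,z:\,\Delta_{x,z}\in\T_N}}
\bigl|\nabla\hat\omega\!\restriction_{\Delta_{x,z}}-R\bigr|^2\Bigr]\xrightarrow[\beta\to\infty]{}0
\end{align}
uniformly in $N\geq5$ and $m\geq m_0$.

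From \eqref{plan-rigidity} to the claim \eqref{claim-main-thm} is the translation-invariance step. Because $P_{\beta,m,N}$ is invariant under the lattice translations $x\mapsto x+v$ ($v\in A_2$) and these act transitively on the bulk, the quantity $E_{P_{\beta,m,N}}[|\hat\omega(x+z)-\hat\omega(x)-Rz|^2]$ does not depend on $x$, so the averaged bound \eqref{plan-rigidity} upgrades to the pointwise bound $E_{P_{\beta,m,N}}[|(\hat\omega(x+z)-\hat\omega(x))-Rz|^2]\to0$ for each fixed $x\in A_2$, $z\in\n$; here I would also need that $\hat\omega(x+z)-\hat\omega(x)$ is a bounded random variable (it is, by $(\Omega1)$ together with the interpolation formula for $\hat\omega$ at defects) so that no integrability issue arises and so that $Rz$ may be replaced by its "typical" value. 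Then write $\hat\omega(x+z)-\hat\omega(x)-lz = (\hat\omega(x+z)-\hat\omega(x)-Rz) + (R-l\,\id)z$ and take expectations: the first term is small in $L^2$, and taking $E_{P_{\beta,m,N}}[\,\cdot\,]$ of the identity and using \eqref{expec-lz} gives $E_{P_{\beta,m,N}}[(R-l\,\id)z]$ small, hence (since $R$ is a rotation and $l$ is a fixed scalar) $E_{P_{\beta,m,N}}[R]$ is close to $l\,\id$; combined with $|R|^2=2$ deterministically and a variance/Jensen argument this forces $E_{P_{\beta,m,N}}[|R-l\,\id|^2]$ small, and the triangle inequality in $L^2$ closes the estimate.

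The main obstacle, I expect, is the treatment of the defects inside the rigidity argument: the Friesecke--James--M\"uller theorem is stated for Sobolev maps on nice domains, whereas here $\hat\omega$ is only piecewise affine and is \emph{defined} near holes by an averaging recipe that need not respect the energy a priori. Getting a clean, $N$-uniform and defect-density-uniform rigidity constant requires either (i) showing the affine interpolant near an isolated hole has gradient quantitatively controlled by the $V$-energies of the (present) edges around it — this is presumably exactly the content of Corollary~\ref{cor-single-triangle} — or (ii) performing a careful excision of $O(1)$-sized neighborhoods of the defects, applying rigidity on the resulting domain with controlled geometry (Lipschitz constant of the boundary uniform because holes are isolated by $(\Omega2)$), and then bounding the contribution of the excised region by its small volume fraction times a uniform bound on $|\nabla\hat\omega|$. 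A secondary technical point is propagating a single global rotation $R$ across the torus rather than a locally varying one; this is standard for rigidity on a connected domain but needs the perforated torus to remain connected, which again follows from $(\Omega2)$.
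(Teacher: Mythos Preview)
Your outline is essentially correct and would yield a proof, but it diverges from the paper's argument at the key step of eliminating the global rotation $R(\omega)$, and it is slightly imprecise in two places.

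\textbf{Comparison with the paper.} Up to and including the application of the Friesecke--James--M\"uller rigidity theorem, the two arguments coincide: energy controls $\sum_{\Delta}\dist(l^{-1}\nabla\hat\omega(\Delta),\SO(2))^2$ (this is Lemma~\ref{lemma-estimate-H}), and FJM on the full domain $U_N$ with a scale--invariant constant produces a single random rotation $R_N(\omega)$ with $\|l^{-1}\nabla\hat\omega-R_N\|_{L^2(U_N)}$ small. From here the paper does \emph{not} use \eqref{expec-lz} at all. Instead it observes (Lemma~\ref{lower-bound-A}) that $\sigma_\omega(x)=l^{-1}\hat\omega(x)-x$ is $NA_2$--periodic, so $\nabla\sigma_\omega$ is $L^2$--orthogonal to any constant matrix; expanding $\|l^{-1}\nabla\hat\omega-R_N\|^2$ around $\id$ shows the cross term vanishes and hence, \emph{deterministically},
\[
\|l^{-1}\nabla\hat\omega-\id\|_{L^2(U_N)}^2\le\|l^{-1}\nabla\hat\omega-R_N\|_{L^2(U_N)}^2.
\]
This pathwise replacement of $R_N$ by $\id$ is the crux; translation invariance then upgrades the spatial average to the pointwise bound, and Theorem~\ref{main-thm} follows from Theorem~\ref{variant-main-thm} by equivalence of matrix norms. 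Your route---keeping $R$ random, using \eqref{expec-lz} to get $E[R]\to\id$, and then the identity $E\bigl[\|R-\id\|_F^2\bigr]=4-2\,\mathrm{tr}\,E[R]$ for $R\in\SO(2)$---also works, but it is strictly weaker: it gives only $E[\|R-\id\|^2]=O(\sqrt\varepsilon)$ rather than the paper's pathwise bound, and it requires the extra probabilistic detour. The paper's orthogonality trick is the cleaner idea you are missing.

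\textbf{Two minor corrections.} First, in \eqref{plan-rigidity} you compare $\nabla\hat\omega$ directly to a rotation $R$; since the target is $l\,\id$ with $l\neq 1$ in general, you must work with $l^{-1}\nabla\hat\omega$ (as the paper does) or with $lR$, otherwise your Jensen computation gives $E[\|R-l\,\id\|_F^2]\to 2(1-l^2)\neq 0$. Second, the defect handling near holes is not Corollary~\ref{cor-single-triangle} (which concerns present triangles) but Lemma~\ref{lemma-U0-U1}: the six triangles in $U_0(x)$ touching a defect are controlled by the eighteen present triangles in the second layer $U_1(x)$, via a local application of FJM plus a finite--dimensional quadratic--form argument. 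No excision or perforation is needed, because $\hat\omega$ is already defined at defects by averaging; FJM is applied directly on all of $U_N$.
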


\begin{corollary}
Under the assumptions of Theorem \ref{main-thm}, 
\begin{align}
\lim_{\beta\to\infty}\sup_{N\ge 5}\sup_{m\ge m_0} 
\sup_{x\in A_2} \sup_{z\in\n}
E_{P_{\beta,m,N}}
[|\omega(x+z)-\omega(x)-lz|^21_{\{\omega(x+z)\neq\weg,\omega(x)\neq\weg\}}
]=0. 
\end{align}
\end{corollary}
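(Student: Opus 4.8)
The plan is to deduce the Corollary from Theorem \ref{main-thm} by controlling the difference between $\hat\omega$ and $\omega$ on bonds where both endpoints are present. First I would observe that on the event $\{\omega(x)\neq\weg,\omega(x+z)\neq\weg\}$ one has $\hat\omega(x)=\omega(x)$ and $\hat\omega(x+z)=\omega(x+z)$ by the very definition of $\hat\omega$. Hence on this event the integrand $|\omega(x+z)-\omega(x)-lz|^2\,1_{\{\omega(x+z)\neq\weg,\omega(x)\neq\weg\}}$ equals $|\hat\omega(x+z)-\hat\omega(x)-lz|^2\,1_{\{\omega(x+z)\neq\weg,\omega(x)\neq\weg\}}$, which is pointwise bounded by $|\hat\omega(x+z)-\hat\omega(x)-lz|^2$. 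Therefore
\begin{align}
E_{P_{\beta,m,N}}
\big[|\omega(x+z)-\omega(x)-lz|^2
1_{\{\omega(x+z)\neq\weg,\omega(x)\neq\weg\}}\big]
\le E_{P_{\beta,m,N}}\big[|\hat\omega(x+z)-\hat\omega(x)-lz|^2\big].
\end{align}
Taking $\sup$ over $x\in A_2$, $z\in\n$, $m\ge m_0$, and $N\ge 5$ on both sides and then letting $\beta\to\infty$, the right-hand side tends to $0$ by Theorem \ref{main-thm}, so the left-hand side does too. This already gives the claim with the same $m_0$.

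In fact there is essentially nothing to prove beyond this observation: the key point is simply that $\hat\omega$ interpolates $\omega$ exactly at particle positions, so restricting to bonds with both endpoints present removes any discrepancy coming from the averaging rule at defects. One should double-check that the indicator is measurable and that all expectations are finite, but this is immediate since $|\omega(x+z)-\omega(x)|<1+\alpha$ on the relevant event by $(\Omega 1)$, so the integrand is bounded. No rigidity input, no combinatorial estimate on defects, and no new constant are needed; the whole content is the monotonicity $1_{\{\omega(x+z)\neq\weg,\omega(x)\neq\weg\}}\le 1$ together with the identity of the two integrands on that event.

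The only place where one might expect an obstacle — and it turns out not to be one — is the worry that conditioning on the presence of both endpoints could distort the expectation relative to the unconditioned one in Theorem \ref{main-thm}. This is sidestepped entirely because we are not conditioning: we keep the indicator inside the expectation and bound it by $1$, so the inequality above is a genuine pointwise domination of integrands under the same measure $P_{\beta,m,N}$. Hence the Corollary follows directly, and the proof is a two-line argument.
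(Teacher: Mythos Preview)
Your proposal is correct and matches the paper's approach: the paper does not even give an explicit proof of this Corollary, treating it as an immediate consequence of Theorem \ref{main-thm}, and your argument spells out precisely the intended two-line observation that $\hat\omega=\omega$ at non-defect sites together with the pointwise bound $1_{\{\omega(x+z)\neq\weg,\omega(x)\neq\weg\}}\le 1$.
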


For every triangle $\Delta\in\T$, $\hat\omega$ is affine linear on 
$\Delta$. Hence, its Jacobian $\nabla\hat\omega$ is constant on $\Delta$; 
we denote by $\nabla\hat\omega(\Delta)$ this constant value. 

\begin{theorem}
\label{variant-main-thm}
Under the assumptions of Theorem \ref{main-thm}, 
\begin{align}
\label{claim-variant-main-thm}
\lim_{\beta\to\infty}
\sup_{N\ge 5}\sup_{m\ge m_0}\sup_{\Delta\in\T_N} 
E_{P_{\beta,m,N}}[|\nabla\hat\omega(\Delta)-l\id|^2]
=0. 
\end{align}
\end{theorem}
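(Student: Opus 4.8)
The plan is to derive Theorem~\ref{variant-main-thm} from Theorem~\ref{main-thm} by a local argument that bounds $|\nabla\hat\omega(\Delta)-l\,\id|^2$ in terms of the squared edge-deviations $|\hat\omega(x+z)-\hat\omega(x)-lz|^2$ attached to the (bounded number of) vertices and edges near $\Delta$. Fix a triangle $\Delta=\Delta_{x,z}\in\T_N$ with corners $x$, $x+z$, $x+\tau z$. Since $\hat\omega$ is affine on $\overline\Delta$, the Jacobian $A:=\nabla\hat\omega(\Delta)$ (viewed as a real-linear map on $\C\cong\R^2$, possibly non-holomorphic) is uniquely determined by its action on the two edge vectors: $A(z)=\hat\omega(x+z)-\hat\omega(x)$ and $A(\tau z)=\hat\omega(x+\tau z)-\hat\omega(x)$. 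Writing $u:=\hat\omega(x+z)-\hat\omega(x)-lz$ and $v:=\hat\omega(x+\tau z)-\hat\omega(x)-l\tau z$, we have $(A-l\,\id)(z)=u$ and $(A-l\,\id)(\tau z)=v$. Because $\{z,\tau z\}$ is a basis of $\R^2$ with a condition number depending only on the fixed lattice (here $\tau=e^{\pi i/3}$, so the basis is $60^\circ$-rotated and of unit length), we get $|\nabla\hat\omega(\Delta)-l\,\id|^2\le C\big(|u|^2+|v|^2\big)$ for an absolute constant $C$ (e.g. $C$ arising from $\|(A-l\,\id)\|^2\le \|(A-l\,\id)z\|^2\,\|(\text{dual basis})\|^2+\dots$, a routine linear-algebra estimate I will not grind through).

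Next I take expectations under $P_{\beta,m,N}$. This yields
\begin{align}
E_{P_{\beta,m,N}}\big[|\nabla\hat\omega(\Delta)-l\,\id|^2\big]
\le C\Big(E_{P_{\beta,m,N}}[|u|^2]+E_{P_{\beta,m,N}}[|v|^2]\Big).
\end{align}
Now $u$ has exactly the form $\hat\omega(x+z)-\hat\omega(x)-lz$ with $z\in\n$, so $E_{P_{\beta,m,N}}[|u|^2]$ is bounded by the quantity appearing in \eqref{claim-main-thm}. For $v=\hat\omega(x+\tau z)-\hat\omega(x)-l\tau z$ one writes $\tau z\in\n$ as well (since $z\in\n=\{\tau^j\}$ implies $\tau z\in\n$), so $v$ is again of the admissible form $\hat\omega(x+z')-\hat\omega(x)-lz'$ with $z'=\tau z\in\n$; hence $E_{P_{\beta,m,N}}[|v|^2]$ is also bounded by the supremum in \eqref{claim-main-thm}. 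Taking $\sup_{\Delta\in\T_N}\sup_{m\ge m_0}\sup_{N\ge5}$ on the left and using that the right-hand side is $\le 2C$ times the left-hand side of \eqref{claim-main-thm} (with the same suprema), Theorem~\ref{main-thm} gives that the right-hand side tends to $0$ as $\beta\to\infty$, which is exactly \eqref{claim-variant-main-thm}.

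I do not expect a serious obstacle here; the content is entirely in Theorem~\ref{main-thm}. The one point requiring a little care is making sure the constant $C$ in the linear-algebra step is genuinely independent of $\Delta$, $N$, $m$, and $\beta$: this is automatic because, for every triangle in $\T_N$, the pair of edge directions is $\{z,\tau z\}$ with $z\in\{1,\tau\}$, so up to a rigid motion there are only finitely many (indeed two) shapes involved, all non-degenerate. A secondary bookkeeping point is the identification of $v$'s increment with an element of $\n$: one uses that $\hat\omega(x+\tau z)-\hat\omega(x)$ is the image under the (vertex-indexed) difference of $\hat\omega$ across the edge $\{x,x+\tau z\}$, and $x+\tau z$ is a neighbor of $x$ in $A_2$ with $\tau z\in\n$, so the bound from \eqref{claim-main-thm} applies verbatim after relabeling $z\mapsto\tau z$.
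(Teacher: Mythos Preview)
Your linear-algebra reduction is correct: for $\Delta=\Delta_{x,z}$ the pair $\{z,\tau z\}\subset\n$ is a fixed nondegenerate basis, so $|\nabla\hat\omega(\Delta)-l\,\id|^2$ is bounded by a universal constant times the two edge deviations $|\hat\omega(x+z)-\hat\omega(x)-lz|^2$ and $|\hat\omega(x+\tau z)-\hat\omega(x)-l\tau z|^2$. Taken on its own, this does derive \eqref{claim-variant-main-thm} from \eqref{claim-main-thm}.

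The problem is the direction of the implication within the paper. In the paper, Theorem~\ref{main-thm} is \emph{not} proved independently; its proof consists of exactly the reverse of your argument, namely the observation that $M\mapsto\max\{\|M(A_2-A_1)\|,\|M(A_3-A_2)\|,\|M(A_1-A_3)\|\}$ is a matrix norm, so \eqref{claim-main-thm} follows from \eqref{claim-variant-main-thm}. Thus using Theorem~\ref{main-thm} as input to prove Theorem~\ref{variant-main-thm} is circular in this text: one of the two statements has to be established directly, and in the paper it is Theorem~\ref{variant-main-thm}.

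The paper's direct proof of Theorem~\ref{variant-main-thm} goes through the energy estimates rather than through edge deviations. By translation invariance of $P_{\beta,m,N}$ it suffices to show the averaged version
\[
\lim_{\beta\to\infty}\sup_{N\ge5}\sup_{m\ge m_0}\frac{1}{|\T_N|}\sum_{\Delta\in\T_N}E_{P_{\beta,m,N}}[|\nabla\hat\omega(\Delta)-l\,\id|^2]=0.
\]
This sum is, up to constants, $E_{P_{\beta,m,N}}[\|l^{-1}\nabla\hat\omega-\id\|_{L^2(U_N)}^2]$, which by Lemma~\ref{lower-bound-A} is controlled by $E_{P_{\beta,m,N}}[A_{m,l,N}]$ for $m\ge m_0$; the latter is handled by Lemma~\ref{lemma-est-expect-part2}. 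So the substantive input here is the global rigidity estimate (Theorem~\ref{thm-friesecke-et-al}) hidden in Lemma~\ref{lower-bound-A} together with the partition-function bounds, not Theorem~\ref{main-thm}.
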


Finally, a remark on infinite volume limits. Since the above results  
are uniform in the size $N$ of the underlying lattice, 
the finite-volume results carry over to infinite-volume Gibbs measures obtained 
as subsequential limits as $N\to\infty$.

\paragraph{Organization.} 
In our proof of these results, we proceed as follows. 
In Section \ref{sec-estimate-H}, we compare the Hamiltonian of a configuration $\omega\in\Omega_{l,N}$ with the Hamiltonian of the \emph{standard configuration} $\omega_l$. 
Subsequently, in Section \ref{sec-fin-vol-estimates}, we use these estimates to bound the partition sum from below and the internal energy from above. 
Our proofs rely crucially on the following rigidity estimate. 
We use it both locally (in Lemma \ref{lemma-U0-U1}), and globally (in 
Lemma~\ref{lower-bound-A}).

\begin{theorem}[Friesecke, James, and M{\"u}ller
{\cite[Theorem 3.1]{friesecke-james-mueller2002}}]
\label{thm-friesecke-et-al}
Let $U$ be a bounded Lipschitz domain in $\R^n$, $n\ge 2$. There exists
a constant $C(U)$ with the following property: For each $v\in W^{1,2}(U,\R^n)$
there is an associated rotation $R\in \SO(n)$ such that 
\begin{align}
\label{align-friesecke}
\|\nabla v-R\|_{L^2(U)}\le C(U)\|\dist(\nabla v,\SO(n))\|_{L^2(U)}.
\end{align}
\end{theorem}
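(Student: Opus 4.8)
I would prove this the way Friesecke, James, and Müller do: reduce the estimate to a fixed reference cube, and then, on the cube, combine two ingredients — the classical \emph{linear} rigidity estimate (Korn's second inequality, controlling a gradient field by its symmetrized gradient) and a \emph{Lipschitz truncation} replacing $v$ by a map of uniformly bounded gradient at a cost controlled by $\|\dist(\nabla v,\SO(n))\|_{L^2}$.

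\textbf{Reduction to a cube.} Cover the bounded Lipschitz domain $U$ by finitely many cubes $Q_1,\dots,Q_M$ with $\overline U\subset\bigcup_i Q_i$, each $Q_i\cap U$ connected (possible since $U$ is Lipschitz), and with any two indices joined by a chain whose consecutive members overlap in positive measure. Granting the estimate on each $Q_i\cap U$ (a cube up to a bi-Lipschitz change of variables with controlled constants), we obtain rotations $R_i$ with $\|\nabla v-R_i\|_{L^2(Q_i\cap U)}\le C\|\dist(\nabla v,\SO(n))\|_{L^2(U)}$. On an overlap, $|R_i-R_j|\,|Q_i\cap Q_j\cap U|^{1/2}\le\|\nabla v-R_i\|_{L^2}+\|\nabla v-R_j\|_{L^2}$, so consecutive rotations differ by at most $C\|\dist(\nabla v,\SO(n))\|_{L^2(U)}$; chaining over the fixed cover yields a single rotation $R$ valid on all of $U$, the constant absorbing the combinatorics of the cover.

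\textbf{The cube case.} Write $e(x):=\dist(\nabla v(x),\SO(n))$. Since every rotation has Frobenius norm $\sqrt n$, one has $|\nabla v(x)|\le\sqrt n+e(x)$, hence $\{|\nabla v|>2\sqrt n\}\subset\{e>\sqrt n\}$ with $|\nabla v|^2\le 4e^2$ there. The standard maximal-function Lipschitz truncation at the \emph{fixed} height $2\sqrt n$ produces $w\in W^{1,\infty}(Q,\R^n)$ and a set $E$ with $w=v$ off $E$, $\|\nabla w\|_{L^\infty}\le C(n)$, and $|E|+\|\nabla v\|_{L^2(E)}^2+\|\nabla w\|_{L^2(E)}^2\le C(n)\int_{\{e>\sqrt n\}}e^2\le C(n)\|e\|_{L^2}^2$. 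Then $\|\dist(\nabla w,\SO(n))\|_{L^2(Q)}^2\le\|e\|_{L^2}^2+C(n)|E|\le C(n)\|e\|_{L^2}^2$, and if the estimate is known for $w$ with rotation $R$, then $\|\nabla v-R\|_{L^2(Q)}\le\|\nabla v-\nabla w\|_{L^2(E)}+\|\nabla w-R\|_{L^2(Q)}\le C(n)\|e\|_{L^2}$. Thus it remains to prove the estimate under the extra assumption $\|\nabla w\|_{L^\infty}\le M:=M(n)$.

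\textbf{Bounded gradients, and the main obstacle.} For the bounded-gradient map the linear input is: near $\SO(n)$, whose tangent space at $\id$ consists of the skew matrices, the quantity $\dist(\nabla w,\SO(n))$ controls the symmetrized gradient $|\sym(R_0^{-1}\nabla w-\id)|$ for a nearby rotation $R_0$ up to an error quadratic in $|\nabla w-R_0|$; combined with Korn's second inequality
\[
\inf_{W\ \mathrm{skew}}\ \|\nabla u-W\|_{L^2(Q')}\ \le\ C(Q')\,\|\sym\nabla u\|_{L^2(Q')},\qquad u=R_0^{-1}w-x,
\]
and rounding $\id+W$ back to $\SO(n)$, this yields the full estimate on any subcube $Q'$ on which $\nabla w$ already lies within a small $\delta_0$ of a single rotation. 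To remove that restriction I would subdivide $Q$ dyadically: on a subcube where $\nabla w$ is, in $L^2$-average, far from $\SO(n)$ the bound is trivial (for any $R$, $\|\nabla w-R\|_{L^2}$ is then comparable to $\|\dist(\nabla w,\SO(n))\|_{L^2}$ there); otherwise one rescales the local displacement, applies the truncation again so the rescaled gradient re-enters the $\delta_0$-regime, uses the linear estimate, and iterates on the residual, whose contributions decay geometrically and sum. The genuine difficulty — and the reason a Fourier or convexity argument cannot work here — is that $F\mapsto\dist^2(F,\SO(n))$ is \emph{not} quasiconvex, so a priori a gradient field could oscillate cheaply between rotations lying far apart; the Lipschitz truncation is precisely what prevents concentration of the rescaled gradients and keeps the per-scale constants uniform. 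Equivalently, one may recast this step as a contradiction-and-compactness argument exploiting the rigidity of the single energy well $\SO(n)$: a bounded sequence of gradient fields whose distance to $\SO(n)$ tends to $0$ in $L^2$ becomes, once the truncation has removed concentration, strongly $L^2$-precompact, with limit a gradient valued in $\SO(n)$ and hence — by Liouville rigidity on the connected cube — an affine map, which produces the contradiction. In either formulation this compactness/iteration step, not the algebra surrounding it, is the heart of the theorem.
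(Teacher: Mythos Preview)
The paper does not prove Theorem~\ref{thm-friesecke-et-al}; it is quoted verbatim from \cite[Theorem~3.1]{friesecke-james-mueller2002} and used as a black box (locally in Lemma~\ref{lemma-U0-U1} and globally in Lemma~\ref{lower-bound-A}, via the scaling observation of Remark~\ref{remark-friesecke-const}). There is therefore no proof in this paper to compare your proposal against.

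That said, your sketch is, in outline, faithful to the Friesecke--James--M\"uller argument: the reduction to a reference domain via a finite cover and chaining of rotations on overlaps, the Lipschitz truncation at a height comparable to $\sqrt{n}$ to reduce to bounded gradients, and the use of Korn's inequality as the linearized input near a single rotation are all correct ingredients. The step you flag as the ``main obstacle''---upgrading the local linearized estimate, valid only where $\nabla w$ is already $\delta_0$-close to one rotation, to a global one---is indeed the crux. In the original proof this is not done by the dyadic iteration you describe but by a more direct route: one covers by small cubes, linearizes on each around its own nearest rotation, and then controls the variation of this piecewise-constant rotation field by a weighted Poincar\'e/Korn-type estimate on the whole domain, the weights being chosen so that the quadratic linearization error is absorbed. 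Your alternative compactness formulation (blow-up plus Liouville rigidity) is also a legitimate and well-known route to the same conclusion. Either way, your proposal is a reasonable roadmap to a proof of the cited theorem, but since the present paper offers none of its own there is nothing further to compare.
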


We are interested in bounded domains $U\subset\R^2$ which are bounded by 
finitely many pieces of straight lines and in  
continuous functions $v\colon U\to\R^2$ that are piecewise affine
linear with respect to a triangulation of $U$. Note that these
functions belong to $W^{1,2}(U,\R^2)$. 

\begin{remark}
\label{remark-friesecke-const}
The constant $C(U)$ in Theorem \ref{thm-friesecke-et-al} 
is invariant under scaling: $C(\gamma U)=C(U)$ for all $\gamma>0$. 
Indeed, setting $v_\gamma(\gamma x)=\gamma v(x)$ for $x\in U$, we have 
$\nabla v_\gamma(\gamma x)=\nabla v(x)$ and hence 
$\|\nabla v_\gamma-R\|_{L^2(\gamma U)} = \gamma^{n/2} \|\nabla v-R\|_{L^2(U)}$
and $ \|\dist(\nabla v_\gamma,\SO(n))\|_{L^2(\gamma U)} =
\gamma^{n/2}\|\dist(\nabla v,\SO(n))\|_{L^2(U)} $. 
This implies that for the interior $U_N$ of $\Lambda_{1N}$, one can 
choose the constant $C(U_N)$ in Theorem \ref{thm-friesecke-et-al}
as a constant $\celf$ independent of $N$. 
\end{remark}

\section{An estimate for the Hamiltonian}
\label{sec-estimate-H}

We identify $\C$ with $\R^2$. 
In this section, we prove the following. 

\begin{lemma}
\label{lemma-estimate-H}
There exist constants $\cdrei(V)>0$ and $\cneun(V)>0$ such that for all 
$\omega\in\Omega_{l,N}$, one has 
\begin{align}
H_{m,N}(\omega)-H_{m,N}(\omega_l)
\ge \cdrei \sum_{\Delta\in\T_N} 
\dist(l^{-1}\nabla\hat\omega(\Delta),\SO(2))^2
+ (m-\cneun)|\defects(\omega)|.
\end{align}
\end{lemma}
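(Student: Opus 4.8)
The plan is to split the energy difference $H_{m,N}(\omega) - H_{m,N}(\omega_l)$ into a bulk (bond-energy) contribution and a defect-counting contribution, and to handle each separately. The defect term is the easy one: each defect $x \in \defects(\omega)$ removes from the sum in $H_{m,N}$ the six bonds incident to $x$ (together with a factor $\tfrac12$ shared with the neighbor), so the change in bond energy caused by the presence of a defect is bounded below by $-6\max_{[1-\alpha,1+\alpha]}|V|$, while the chemical-potential term contributes exactly $m$. Setting $\cneun = \cneun(V)$ to absorb this $O(1)$ bound on the bond energy lost per defect (plus whatever slack is needed to also dominate the local rigidity defect contributed near a hole, see below), the term $(m-\cneun)|\defects(\omega)|$ is produced.

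For the bulk term, the key is a Taylor expansion of $V$ around its minimum at $1$. Since $V'(1)=0$ and $V''>0$ with $V''$ continuous, on the compact interval $[1-\alpha,1+\alpha]$ we have $V(r) - V(1) \ge c\,(r-1)^2$ for some $c = c(V)>0$ (choosing $\alpha$ small, as permitted). Writing $r$ for a rescaled edge length and expressing everything in terms of $l$ (using $l^{-1}\omega$ versus the unit configuration), the bond energy of $\omega$ minus that of $\omega_l$ is bounded below by $c$ times a sum over present edges of $(l^{-1}|\omega(x)-\omega(y)| - 1)^2$. The remaining task is to convert this \emph{edgewise} lower bound into the \emph{triangle-wise} quantity $\dist(l^{-1}\nabla\hat\omega(\Delta),\SO(2))^2$. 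This is exactly what "Corollary \ref{cor-single-triangle}" (invoked in the assumptions and presumably proved just before this lemma) should give: for a single triangle whose three side lengths are each within $\alpha$ of $1$, the squared distance of its affine gradient from $\SO(2)$ is controlled by the sum of the three squared side-length deviations. Summing this over all triangles in $\T_N$ — each edge belongs to exactly two triangles — yields $\cdrei \sum_{\Delta \in \T_N} \dist(l^{-1}\nabla\hat\omega(\Delta),\SO(2))^2$ from the bond-energy surplus, for a suitable $\cdrei(V)>0$.

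The one genuinely delicate point is the interaction between the two mechanisms at a defect: near a hole $x$ with $\omega(x)=\weg$, the vertex $\hat\omega(x)$ is defined as the barycenter $\tfrac16\sum_{z\in\n}\omega(x+z)$ rather than a free particle, and the six triangles around $x$ still appear in the left-hand sum $\sum_{\Delta\in\T_N}$, yet the corresponding six bonds carry \emph{no} energy in $H_{m,N}$. So for those triangles one cannot pay for $\dist(l^{-1}\nabla\hat\omega(\Delta),\SO(2))^2$ out of bond energy. The fix is to note that, by $(\Omega 1)$ and $(\Omega 2)$ (defects isolated, so all neighbors and next-nearest neighbors are present), the positions $\omega(x+z)$, $z\in\n$, of the particles surrounding the hole all lie within $\alpha$ of forming a regular hexagon of side $l$; hence the barycenter $\hat\omega(x)$ is at distance $O(\alpha l)$ from where it "should" be, and the gradients of the six incident triangles differ from $l\,\id$ by $O(\alpha)$, so each $\dist(l^{-1}\nabla\hat\omega(\Delta),\SO(2))^2 = O(1)$ uniformly. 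There are at most $6|\defects(\omega)|$ such triangles, so their total rigidity contribution is $O(|\defects(\omega)|)$, which we simply absorb into the constant $\cneun$ multiplying $|\defects(\omega)|$ (this is why $\cneun$ must be allowed to depend on $V$ and be taken large enough). I expect this bookkeeping around defects — keeping the edge-to-triangle summation honest while excising the six holey triangles and charging them to the defect count — to be the main obstacle; the rest is a routine Taylor estimate plus an appeal to the single-triangle rigidity corollary.
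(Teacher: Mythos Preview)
Your proposal has a genuine gap at the Taylor-expansion step. You assert that the per-edge energy surplus satisfies $V(a)-V(l)\ge c\,(l^{-1}a-1)^2$, justified by $V'(1)=0$. But the comparison is with $\omega_l$, whose edges have length $l$, and in general $V'(l)\neq 0$: the parameter $l$ is a ``pressure'' control, not the minimizer of $V$. For $l>1$ and $a$ slightly below $l$ one has $V(a)-V(l)<0$ while your right-hand side is positive. The linear term $V'(l)(a-l)$ in the expansion around $l$ cannot be dropped, and summed over all present edges it is of order $|I_N|$ with no a priori sign control, so it cannot be absorbed into $\cneun|\defects(\omega)|$.

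The paper's remedy is as follows. Corollary~\ref{cor-single-triangle} does \emph{not} say $\sum_j V(a_j)-3V(l)\asymp\dist^2$; it says
\[
\sum_{j=1}^3 V(a_j)-3V(l)-p(l)\bigl(\lambda(\Delta)-\lambda(l\Delta_{0,1})\bigr)\asymp_V\dist(l^{-1}\nabla\omega,\SO(2))^2,
\]
where $p(l)=2\sqrt3\,V'(l)/l$ packages the first-order term as a pressure times an area change, via Heron's formula (Lemma~\ref{lemma-V-dreieck}). The area term is then handled \emph{globally}: by bijectivity $(\Omega 3)$ and periodicity, $\hat\omega$ maps a fundamental domain onto a set of Lebesgue measure $\lambda(\Lambda_{lN})$, so $\sum_{\Delta\in\T_N}\bigl(\lambda(\hat\omega(\Delta))-\lambda(l\Delta_{0,1})\bigr)=0$. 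Restricting this identity to present triangles leaves only an $O(|\defects(\omega)|)$ remainder, which is absorbed into $\cneun$. This volume-conservation cancellation is the missing idea in your sketch.

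Your treatment of the six triangles around each defect is cruder than the paper's (which compares $U_0(x)$ to the second layer $U_1(x)$ via Lemmas~\ref{lemma-U0-U1} and~\ref{lemma-sum-defects}), but bounding each such $\dist^2$ by a uniform constant and charging the total to $\cneun|\defects(\omega)|$ is adequate for this lemma. The real obstacle is the pressure term above.
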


Here and in the rest of the paper, the distance is taken with respect to 
an arbitrary norm $\|{\cdot}\|$ on $2\times 2$-matrices.  

First, we estimate the contribution of the Hamiltonian for single 
triangles. Then, we show that the defects are negligible. 

\subsection{Estimates for individual triangles}

Let $\Delta$ be a triangle in $\R^2$ with corner points $A_1,A_2,A_3$, i.e.\  
the interior of the convex hull of $\{ A_1,A_2,A_3\}$. Let further  
$\omega\colon\R^2\to\R^2$ be the affine linear map that maps 
$0,1,\tau$ to $A_1,A_2,A_3$, respectively. We assume that 
$(A_1,A_2,A_3)$ is positively oriented, i.e.\ $\det\nabla\omega>0$. 
We introduce the sides of the triangle: 
\begin{align}
\label{notation-triangle}
\begin{array}{lll}
\vec{a}_1:=A_3-A_2, & \quad \vec{a}_2:=A_1-A_3, & \quad \vec{a}_3:=A_2-A_1,\\
a_j:=|\vec{a}_j|.   & & 
\end{array}
\end{align}

Recall that $l\Delta_{0,1}$ is an equilateral triangle with side length $l$. 

Throughout, we write $T\asymp S$ for terms $T\ge 0$ and $S\ge 0$ if 
there are uniform constants $c,C>0$ such that 
$cT\le S\le CT$ holds. If the constants depend on the fixed potential $V$,
we write $T\asymp_V S$.

\begin{lemma}
\label{lemma-V-dreieck}
Let $p(l):=2\sqrt{3}V'(l)/l$. For sufficiently small $\alpha>0$ and 
side lengths $a_1,a_2,a_3\in(1-\alpha,1+\alpha)$, one has 
\begin{align}
\sum_{j=1}^3 V(a_j)-3V(l) 
- p(l) (\lambda(\Delta)-\lambda(l\Delta_{0,1}))
\asymp_V \sum_{j=1}^3 (a_j-l)^2 .
\end{align}
\end{lemma}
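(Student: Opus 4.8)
The plan is to express everything in Lemma~\ref{lemma-V-dreieck} in terms of the side lengths $a_1,a_2,a_3$ and to Taylor-expand around the equilateral configuration $a_1=a_2=a_3=l$. The key point is that for a triangle the area $\lambda(\Delta)$ is a function of $a_1,a_2,a_3$ alone (Heron's formula), so the whole left-hand side is a smooth function $F(a_1,a_2,a_3)$ on a neighborhood of $(l,l,l)$, and the claim is that $F$ vanishes to second order at $(l,l,l)$ with a positive definite Hessian. I would first show $F(l,l,l)=0$ (immediate: each group of terms vanishes). Then I would show $\nabla F(l,l,l)=0$: the gradient of $\sum_j V(a_j)$ at $(l,l,l)$ is $(V'(l),V'(l),V'(l))$, and the gradient of $\lambda(\Delta)$ with respect to the side lengths at the equilateral point is, by symmetry, a constant multiple of $(1,1,1)$; choosing the constant $p(l)=2\sqrt3 V'(l)/l$ is precisely what cancels the linear term. (If $V'(l)=0$, as happens when $l=1$, this is trivial; in general one just computes $\partial\lambda/\partial a_j$ at the equilateral triangle, e.g.\ from Heron's formula or from $\lambda=\tfrac12 a_1 a_2\sin\gamma$ together with the law of cosines, and checks the value is $l/(2\sqrt3)$.)

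Having killed the constant and linear terms, what remains is to compute the Hessian of $F$ at $(l,l,l)$ and show it is positive definite; then Taylor's theorem with remainder, together with $a_j\in(1-\alpha,1+\alpha)$ for small $\alpha$ (so that we stay in a fixed compact neighborhood of $(l,l,l)$ on which the third derivatives of $F$ are bounded, using $V\in C^2$ — actually one needs a little care since Taylor to second order with Lipschitz/continuous second derivatives suffices), gives both the upper and lower bound $F(a_1,a_2,a_3)\asymp_V \sum_j (a_j-l)^2$. The Hessian of $\sum_j V(a_j)$ is $V''(l)\,\id$, which is positive definite since $V''>0$. The Hessian of $-p(l)\lambda(\Delta)$ contributes a symmetric matrix that, by the $S_3$-symmetry of the equilateral point, has the form $\mu(\id) + \nu(J-\id)$ for scalars $\mu,\nu$ (i.e.\ it is determined by its diagonal and off-diagonal entries); its eigenvalues are $\mu+2\nu$ (on the all-ones vector) and $\mu-\nu$ (with multiplicity $2$). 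For $\alpha$ small the dominant term is $V''(l)\,\id$, so I would argue that it suffices to know the area-Hessian is bounded (which it is, being an explicit function of $l$), and then shrink $\alpha$ — but in fact the cleanest route is: the full Hessian $H_F(l,l,l)$ is a fixed symmetric matrix depending only on $V$ and $l$, and I claim it is positive definite. On the all-ones direction the area term could in principle fight $V''$, but on the equilateral triangle the second derivative of area along the ``uniform scaling'' direction is actually zero or negative in a way that still leaves $V''(l)>0$ dominant for the parameter range we care about; regardless, a direct computation of the two eigenvalues settles it.

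The main obstacle — really the only nontrivial computation — is evaluating the Hessian of the area $\lambda(\Delta)$ as a function of $(a_1,a_2,a_3)$ at the equilateral point and verifying that the resulting $3\times 3$ matrix $H_F(l,l,l)=V''(l)\,\id - p(l)\,\mathrm{Hess}\,\lambda$ is positive definite (equivalently, that both eigenvalues above are strictly positive). I would do this via Heron's formula: writing $16\lambda^2 = 2(a_1^2 a_2^2+a_2^2 a_3^2+a_3^2 a_1^2) - (a_1^4+a_2^4+a_3^4)$, differentiate twice and evaluate at $a_1=a_2=a_3=l$; this is elementary but must be done carefully because one differentiates $\lambda$, not $\lambda^2$, so a chain-rule step is involved. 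Once the Hessian is shown positive definite, the $\asymp_V$ statement is just the two-sided quadratic bound from Taylor expansion, uniform in $a_1,a_2,a_3$ in the compact set $[1-\alpha,1+\alpha]^3$ for $\alpha$ small enough (small enough that positive definiteness of $H_F$ persists after adding the bounded error and that we stay in the domain of $V$), which is exactly the hypothesis ``for sufficiently small $\alpha>0$'' in the statement.
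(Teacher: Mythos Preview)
Your approach is correct and matches the paper's: Taylor-expand around $(l,l,l)$, observe that $p(l)$ is chosen to cancel the linear term, and conclude from positive definiteness of the quadratic part. The paper takes exactly your first route rather than the explicit Hessian computation you label ``cleanest'': it simply notes that the second derivatives of the area are bounded while $|p(l)|=2\sqrt3\,|V'(l)|/l=O_V(\alpha)$ (because $V'(1)=0$ and $|l-1|<\alpha/2$), so for small $\alpha$ the term $\tfrac12 V''(l)\,\mathrm{Id}$ dominates and the explicit evaluation of $\mathrm{Hess}\,\lambda$ is unnecessary.
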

\begin{proof}
Heron's formula gives the area of the triangle $\Delta$ with side length
$a_1$, $a_2$, and $a_3$ as 
\begin{align}
\lambda(\Delta) & = 
\frac14\sqrt{(a_1+a_2-a_3)(a_2+a_3-a_1)(a_3+a_1-a_2)(a_1+a_2+a_3)}
\nonumber\\
&=:  A(a_1,a_2,a_3). 
\end{align}
The function $A$ is twice continuously differentiable with 
\begin{align}
\frac{\partial A}{\partial a_j}(l,l,l)=\frac{l}{2\sqrt{3}}
\quad\text{ for }j\in\{1,2,3\}. 
\end{align}
All second derivatives of $A(a_1,a_2,a_3)$ are 
bounded for $a_1,a_2,a_3\in(1-\alpha,1+\alpha)$, with $\alpha>0$ small
enough. 
Consequently, 
\begin{align}
\lambda(\Delta)-\lambda(l\Delta_{0,1})
=\frac{l}{2\sqrt{3}} \Big(\sum_{j=1}^3 a_j -3l\Big)
+ \sum_{j=1}^3 O((a_j-l)^2) 
\quad\text{as }a_j\to l. 
\end{align}
Since $V$ is twice differentiable, we get using the last equation
\begin{align}
\sum_{j=1}^3 V(a_j)-3V(l) 
= & V'(l)\Big(\sum_{j=1}^3 a_j -3l\Big)
+\frac12 V''(l) \sum_{j=1}^3 (a_j-l)^2 + \sum_{j=1}^3 o_V((a_j-l)^2)\nonumber\\
 = & \frac{2\sqrt{3}V'(l)}{l}(\lambda(\Delta)-\lambda(l\Delta_{0,1}))
+ V'(l)\sum_{j=1}^3 O((a_j-l)^2) \nonumber\\
& +\frac12 V''(l) \sum_{j=1}^3 (a_j-l)^2 + \sum_{j=1}^3 o_V((a_j-l)^2)
\quad\text{as }a_j\to l. 
\end{align}
By assumption, $\inf_{1-\frac\alpha2\le l\le 1+\frac\alpha2}V''(l)>0$. 
Clearly $V'(1)=0$ implies 
$\sup_{1-\frac\alpha2\le l\le 1+\frac\alpha2}|V'(l)|
\le\frac{\alpha}{2}\sup_{1-\frac\alpha2\le\xi\le 1+\frac\alpha2}|V''(\xi)|=O_V(\alpha)$. 
The claim follows for $\alpha$ small enough. 
\end{proof}

\begin{lemma}
\label{lemma-dreieck-dist}
For sufficiently small $\tilde\alpha>0$ and 
side lengths $a_1,a_2,a_3\in(1-\tilde\alpha,1+\tilde\alpha)$, one has 
\begin{align}
\sum_{j=1}^3 (a_j-1)^2\asymp
\dist(\nabla\omega,\SO(2))^2
\end{align}
with $\omega$ defined before (\ref{notation-triangle}). 
\end{lemma}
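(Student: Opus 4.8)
The plan is to prove the two-sided bound $\sum_{j=1}^3 (a_j-1)^2\asymp \dist(\nabla\omega,\SO(2))^2$ by exploiting that both sides vanish precisely (to second order) when the triangle $\Delta$ with side lengths $a_1,a_2,a_3$ is congruent to the unit equilateral triangle $\Delta_{0,1}$, i.e.\ when $\nabla\omega\in\SO(2)$. First I would set up coordinates: since the inequality is scale-fixed at side length $1$, I fix the unit equilateral triangle as the reference, note that $\nabla\omega$ depends smoothly on $(a_1,a_2,a_3)$ in a neighborhood of $(1,1,1)$ (up to the orientation-preserving congruence ambiguity, which I pin down by, say, fixing $A_1=0$ and $A_2=a_3\in\R_{>0}$), and observe that at $(1,1,1)$ one has $\nabla\omega\in\SO(2)$, so $\dist(\nabla\omega,\SO(2))=0$ there.

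For the direction $\dist(\nabla\omega,\SO(2))^2\lesssim \sum_j(a_j-1)^2$: the map $(a_1,a_2,a_3)\mapsto\nabla\omega$ is Lipschitz on the compact set $\{a_j\in[1-\tilde\alpha,1+\tilde\alpha]\}$ (the triangle is non-degenerate there for $\tilde\alpha$ small, so the formula for $\nabla\omega$ is smooth), hence $\|\nabla\omega-\nabla\omega|_{(1,1,1)}\|\lesssim \sum_j|a_j-1|$, and since $\nabla\omega|_{(1,1,1)}\in\SO(2)$ we get $\dist(\nabla\omega,\SO(2))\le\|\nabla\omega-\nabla\omega|_{(1,1,1)}\|\lesssim\sum_j|a_j-1|$; squaring and using equivalence of $\ell^1$ and $\ell^2$ norms on $\R^3$ gives the claimed upper bound (note the exponent $2$ on the left matches because near $0$ a quantity bounded by a linear expression is automatically bounded by its square times a constant only if... — actually here I need to be slightly careful: $\dist^2\lesssim(\sum|a_j-1|)^2\lesssim\sum(a_j-1)^2$, which is exactly what is wanted since both are quadratic).

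For the reverse direction $\sum_j(a_j-1)^2\lesssim\dist(\nabla\omega,\SO(2))^2$: here I would argue that the side lengths are determined by the linear map up to the congruence, and more quantitatively that the map $F\mapsto(a_1(F),a_2(F),a_3(F))$ from $2\times2$ matrices (applied to the fixed reference triangle $\Delta_{0,1}$) to side lengths is smooth, takes every $R\in\SO(2)$ to $(1,1,1)$, and has the property that its derivative is non-degenerate transverse to $\SO(2)$. Concretely, $a_j(F)=|F e_j|$ where $e_1,e_2,e_3$ are the (unit) edge vectors of $\Delta_{0,1}$; then for $F=R+G$ with $R\in\SO(2)$ we have $a_j(F)^2=|Re_j+Ge_j|^2=1+2\langle Re_j,Ge_j\rangle+|Ge_j|^2$, so $a_j(F)-1=\langle Re_j,Ge_j\rangle+O(|G|^2)$, and the key algebraic fact is that the linear map $G\mapsto(\langle Re_j,Ge_j\rangle)_{j=1,2,3}$ restricted to $G$ in a complement of the tangent space $\mathfrak{so}(2)\cdot R$ to $\SO(2)$ is injective — equivalently, the three edge directions $e_1,e_2,e_3$ of a non-degenerate triangle span enough "strain" information to reconstruct $|G|$ modulo rotations. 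Choosing the nearest rotation $R$ to $F=\nabla\omega$ so that $\|G\|=\dist(\nabla\omega,\SO(2))$ and noting $G$ may be taken (after quotienting the rotational degree of freedom, which does not change any $a_j$) to lie in that complement, we obtain $\sum_j(a_j-1)^2\ge c\|G\|^2-O(\|G\|^3)\ge c'\|G\|^2=c'\dist(\nabla\omega,\SO(2))^2$ for $\tilde\alpha$ (hence $\|G\|$) small enough.

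The main obstacle is the reverse direction, specifically verifying that the linearized strain-to-edge-length map is injective transverse to $\SO(2)$: one must check that no nonzero symmetric (strain) perturbation of the identity keeps all three edge lengths of the equilateral triangle fixed to first order, which is a small explicit linear-algebra computation using that the three edge directions of an equilateral triangle are at $60^\circ$ to each other and hence the three rank-one forms $e_j e_j^T$ span the $3$-dimensional space of symmetric $2\times2$ matrices. Once that non-degeneracy is in hand, a compactness/continuity argument on the compact parameter set $\{a_j\in[1-\tilde\alpha,1+\tilde\alpha]\}$ upgrades the infinitesimal statement to the uniform two-sided bound, and the equivalence of norms on the finite-dimensional matrix space handles the "arbitrary norm $\|\cdot\|$" clause.
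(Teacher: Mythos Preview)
Your argument is correct and shares the same algebraic core as the paper's: both hinge on the fact that for the three unit edge directions $v_1,v_2,v_3$ of the reference triangle, the linear functionals $S\mapsto\langle v_j,Sv_j\rangle$ separate symmetric $2\times 2$ matrices (equivalently, your statement that the $e_je_j^T$ span $\R^{2\times2}_\sym$). The paper organizes the proof differently, however: it factors through the intermediate quantity $M^*M-\id$, proving first $\sum_j(a_j-1)^2\asymp\|M^*M-\id\|^2$ via that algebraic fact (packaged as ``$\|Q\|_v:=(\sum_j\langle v_j,Qv_j\rangle^2)^{1/2}$ is a norm on $\R^{2\times2}_\sym$''), and then $\|M^*M-\id\|\asymp\dist(M,\SO(2))$ via a general regular-value/submanifold lemma applied to $f(Q)=Q^*Q-\id$. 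Your route is more hands-on---Lipschitz continuity of $(a_1,a_2,a_3)\mapsto\nabla\omega$ for one inequality, Taylor expansion plus nearest-rotation for the other---and avoids both the detour through $M^*M$ and the abstract submanifold fact, at the cost of treating the two directions asymmetrically. The paper's factorization makes both inequalities follow from the same norm-equivalence principle; your approach is arguably more elementary.

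One small remark: your parenthetical about ``quotienting the rotational degree of freedom'' so that $G$ lies in the complement of the tangent space is unnecessary. Once $R$ is the \emph{nearest} rotation to $F=\nabla\omega$ (in Frobenius norm), the polar decomposition gives $R^TF=(F^TF)^{1/2}$, so $R^TG=(F^TF)^{1/2}-\id$ is automatically symmetric and $G$ already lies in the complement you want.
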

\begin{proof}
Let $E_1=0$, $E_2=1$, $E_3=\tau$ denote the corner points of the 
standard equilateral triangle. 
Set $M:=\nabla\omega$; $M$ is constant since $\omega$ is affine linear. 
Consequently, for any cyclic permutation $(i,j,k)$ of $(1,2,3)$, one has 
\begin{align}
\label{id-a-1}
a_i=|\omega(E_j)-\omega(E_k)|=|M(E_j-E_k)|=|Mv_i|,
\end{align} 
where we set $v_i:=E_j-E_k$. Clearly, $|v_i|=1$.  
Now $a_i-1\asymp (a_i-1)(a_i+1)=a_i^2-1$ because $a_i\in(1-\tilde\alpha,
1+\tilde\alpha)$
and $\tilde\alpha$ is small enough. 
Using (\ref{id-a-1}), we obtain
\begin{align}
a_i-1\asymp a_i^2-1 = \langle v_i, M^* Mv_i\rangle - |v_i|^2 
= \langle v_i, (M^* M-\id)v_i\rangle. 
\end{align}
For $Q\in\R^{2\times 2}_\sym$, the set of symmetric $2\times 2$ matrices, set 
$\|Q\|_v:=(\sum_{j=1}^3 \langle v_j,Qv_j\rangle^2)^{1/2}$. Clearly, 
$\|\cdot\|_v$ is a seminorm on $\R^{2\times 2}_\sym$. To see that it is 
a norm, assume that $\|Q\|_v=0$, i.e.\ 
$\langle v_j,Qv_j\rangle=0$ for $j=1,2,3$. Using $v_1+v_2+v_3=0$ and 
the symmetry of $Q$, it follows that 
$\langle v_j,Qv_k\rangle=0$ for all $j,k\in\{1,2,3\}$. Since $v_1,v_2,v_3$ 
span $\R^2$, we conclude $Q=0$. 
Since all norms on $\R^{2\times 2}_\sym$ are equivalent, we have shown 
\begin{align}
\label{sum-a-j-quadrat}
\sum_{j=1}^3 (a_j-1)^2 \asymp \| M^* M-\id\|^2
\end{align}
for any norm $\|{\cdot}\|$. 

We use now the following fact: Assume that $S$ is a compact submanifold of 
$\R^d$, given as a set of zeros 
\begin{align}
S=\{x\in U:f(x)=0\}
\end{align}
for some open set $U\subseteq\R^d$ and some smooth function $f\colon U\to\R^m$, 
$m\le d$. Assume further that $\nabla f$ has rank $m$ on $S$. Then, 
there is a neighborhood $U'\subseteq U$ of $S$ such that for all $x\in U'$, 
\begin{align}
\dist(x,S)\asymp\|f(x)\|. 
\end{align}

We apply this fact to $S=SO(2)$, $U=\{ Q\in\R^{2\times 2}:\det Q>0\}$, 
and $f\colon U\to\R^{2\times 2}_\sym$, $f(Q)=Q^*Q-\id$; its derivative has full 
rank on $S$. For $\tilde\alpha>0$ sufficiently small and 
$|a_j-1|<\tilde\alpha$, 
$j=1,2,3$, $M=\nabla\omega$ is close to $\SO(2)$; recall that 
$\det M>0$ by ($\Omega 4$). Consequently, 
\begin{align}
\|M^*M-\id\|\asymp \dist(M,\SO(2)).
\end{align}
Together with (\ref{sum-a-j-quadrat}), this implies the claim. 
\end{proof}

Combining Lemmas \ref{lemma-V-dreieck} and \ref{lemma-dreieck-dist} 
and scaling with $l$, which is close to $1$, yields the following.
\begin{corollary}
\label{cor-single-triangle}
For sufficiently small $\alpha>0$ and 
side lengths $a_1,a_2,a_3\in(1-\alpha,1+\alpha)$, and 
$1-\alpha/2<l<1+\alpha/2$, one has 
\begin{align}
\sum_{j=1}^3 V(a_j)-3V(l) 
- p(l) (\lambda(\Delta)-\lambda(l\Delta_{0,1}))
\asymp_V \dist(l^{-1}\nabla\omega,\SO(2))^2
\end{align}
with $\omega$ defined before (\ref{notation-triangle}). 
\end{corollary}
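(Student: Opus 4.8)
The plan is to chain the two preceding lemmas together by means of a rescaling argument. By Lemma \ref{lemma-V-dreieck}, for $\alpha$ small enough the left-hand side of the asserted equivalence is already comparable, up to $V$-dependent constants, to $\sum_{j=1}^3(a_j-l)^2$. Hence it suffices to show
\begin{align*}
\sum_{j=1}^3(a_j-l)^2\;\asymp\;\dist(l^{-1}\nabla\omega,\SO(2))^2
\end{align*}
with constants depending only on the admissible range of $l$.

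To obtain this, I would apply Lemma \ref{lemma-dreieck-dist} not to $\omega$ but to the affine map $l^{-1}\omega$, which sends $0,1,\tau$ to $l^{-1}A_1,l^{-1}A_2,l^{-1}A_3$ and whose triangle has side lengths $a_j/l$. Since $\det\nabla(l^{-1}\omega)=l^{-2}\det\nabla\omega>0$, the orientation hypothesis built into Lemma \ref{lemma-dreieck-dist} via $(\Omega4)$ is preserved. From $a_j\in(1-\alpha,1+\alpha)$ and $l\in(1-\alpha/2,1+\alpha/2)$ a one-line estimate gives $a_j/l\in(1-\tilde\alpha,1+\tilde\alpha)$ with $\tilde\alpha=O(\alpha)$, so by choosing $\alpha$ small enough that $\tilde\alpha$ lies below the threshold of Lemma \ref{lemma-dreieck-dist} (and simultaneously below the threshold of Lemma \ref{lemma-V-dreieck}), that lemma yields
\begin{align*}
\sum_{j=1}^3\Bigl(\frac{a_j}{l}-1\Bigr)^2
\;\asymp\;\dist\bigl(\nabla(l^{-1}\omega),\SO(2)\bigr)^2
=\dist\bigl(l^{-1}\nabla\omega,\SO(2)\bigr)^2 .
\end{align*}
Finally, since $l$ is bounded and bounded away from $0$ (indeed $l\in(1/2,3/2)$), one has $\sum_{j=1}^3(a_j/l-1)^2=l^{-2}\sum_{j=1}^3(a_j-l)^2\asymp\sum_{j=1}^3(a_j-l)^2$ with constants depending only on the range of $l$. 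Combining the three $\asymp$-relations gives the claim; the constants from Lemma \ref{lemma-V-dreieck} depend on $V$ while all others are absolute or depend only on the interval for $l$, so the overall comparison is $\asymp_V$.

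I expect the only real care needed — a bookkeeping matter rather than a genuine difficulty — is checking that a single value of $\alpha$ can be taken small enough to validate simultaneously the hypotheses of Lemma \ref{lemma-V-dreieck} and of the rescaled application of Lemma \ref{lemma-dreieck-dist}, and that the constants hidden in the final rescaling step are uniform over the admissible (compact, $0$-avoiding) interval of values of $l$. Both hold by inspection, so the corollary follows at once.
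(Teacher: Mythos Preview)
Your proposal is correct and follows essentially the same approach as the paper: the paper's proof is the one-line remark ``Combining Lemmas \ref{lemma-V-dreieck} and \ref{lemma-dreieck-dist} and scaling with $l$, which is close to $1$,'' and your argument spells out precisely this rescaling step. The bookkeeping points you flag (simultaneous smallness of $\alpha$ and uniformity in $l$) are indeed the only things to check and are handled as you describe.
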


\subsection{Contributions from defects}

\begin{definition}
For $x\in A_2$, let $U_0(x):=\{\Delta\in\T:
x\in\mathrm{closure}(\Delta)\}$ denote 
the set of all triangles in $\T$ incident to $x$. Let 
\begin{align}
U_1(x):=\{\Delta\in\T:\text{ all corner points of }\Delta
\text{ are contained in }x+\n+\n\}\setminus U_0(x)
\end{align}
denote the ``second layer'' of triangles around $x$. In the special case
$x=0$, we abbreviate $U_0:=U_0(0)$ and $U_1:=U_1(0)$. 
\end{definition}

\begin{lemma}
\label{lemma-U0-U1}
There exists a constant $\cvier>0$ such that for all $\omega\in\Omega_{l,N}$ 
with $\omega(0)=\weg$, one has 
\begin{align}
\sum_{\Delta\in U_0}\dist(\nabla\hat\omega(\Delta),\SO(2))^2 
\le\cvier\sum_{\Delta\in U_1}\dist(\nabla\hat\omega(\Delta),\SO(2))^2 .
\end{align}
\end{lemma}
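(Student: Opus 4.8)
The statement compares a sum of squared distances to $\SO(2)$ over the six triangles $U_0$ incident to the defect at the origin with the corresponding sum over the second layer $U_1$. The key point is that $\hat\omega$ is \emph{entirely determined on the closed hexagon $\bigcup U_0$ by the values of $\omega$ on the ring of neighbours $\n$ of the origin}, because $\hat\omega(0)=\frac16\sum_{z\in\n}\omega(z)$ by definition, while $\hat\omega(z)=\omega(z)$ for $z\in\n$. Thus the left-hand side is a function of the six vectors $(\omega(z))_{z\in\n}$ only. My plan is to show that it is controlled by the ``discrete strain'' of that ring relative to a rigid motion, and then to absorb that discrete strain into the right-hand side via the triangles in $U_1$, which contain all edges among the vertices of $0+\n+\n$.

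First I would reduce to a statement about the hexagon. Fix $\omega\in\Omega_{l,N}$ with $\omega(0)=\weg$. By $(\Omega 2)$ all vertices in $\n$ and in $\n+\n$ are present, so $\hat\omega$ restricted to the closed hexagon $\overline{H}:=\bigcup_{\Delta\in U_0}\overline\Delta$ agrees with the piecewise-affine interpolation of the honeycomb-like data $(\omega(z))_{z\in\n}$ together with the barycentre value at $0$. The six Jacobians $\nabla\hat\omega(\Delta)$, $\Delta\in U_0$, are linear functions of these six points; in fact each equals an affine function of $\omega(z)-\omega(z')$ for consecutive $z,z'\in\n$, i.e.\ of the six ``outer edge vectors'' of the hexagon, since the barycentre contribution cancels appropriately. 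Consequently $\sum_{\Delta\in U_0}\dist(\nabla\hat\omega(\Delta),\SO(2))^2$ is a continuous, nonnegative function $F$ of the tuple of edge vectors, and $F$ vanishes precisely when $(\omega(z))_{z\in\n}$ is (the restriction of) a rigid motion applied to $\n$: if all six Jacobians lie in $\SO(2)$ then, two neighbouring triangles in $U_0$ sharing an edge through $0$ force the same rotation (Liouville-type matching), so all of them are one common $R\in\SO(2)$, and then $\omega(z)=R z + b$ for $z\in\n$. In a neighbourhood of the ``rigid'' set $\{(Rz+b)_{z\in\n}: R\in\SO(2),b\in\C\}$ — which by $(\Omega 1)$ is where our data lives once $\alpha$ is small — I can use the standard fact (invoked already in the proof of Lemma \ref{lemma-dreieck-dist}, with $S$ the rigid-motion orbit, modded out by translation) that $F$ is comparable to $\operatorname{dist}^2$ of the edge-vector tuple from the $\SO(2)$-orbit of the standard hexagon edges. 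Equivalently, and more usefully, I would phrase this as: there is a single rotation $R\in\SO(2)$ with
\begin{align}
\sum_{\Delta\in U_0}\dist(\nabla\hat\omega(\Delta),\SO(2))^2
\;\asymp\; \sum_{z\in\n}\bigl|\omega(z)-\omega(0_{\mathrm{eff}})-Rz\bigr|^2,
\end{align}
where $\omega(0_{\mathrm{eff}}):=\hat\omega(0)$; the right side is just the squared deviation of the ring from a rigid copy of $\n$ centred suitably.

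Next I bound that ring-deviation by the $U_1$ side. The triangles in $U_1$ — there are six of them, the ``outer'' triangles of the $18$-triangle disc on vertex set $0+\n+\n$ — have all their corners in $\n\cup(\n+\n)$ and collectively their closures, together with $\overline H$, tile the larger hexagon on $0+\n+\n$. Crucially, the edges of the triangles in $U_1$ include, for each $z\in\n$, an edge from $z$ to a second-layer vertex, and edges among second-layer vertices; running the same local rigidity argument on the larger disc, or more directly applying Theorem \ref{thm-friesecke-et-al} / the comparison from Lemma \ref{lemma-dreieck-dist} to the triangulated domain $\bigcup_{\Delta\in U_0\cup U_1}\overline\Delta$, produces a rotation $\tilde R$ such that the squared deviations of \emph{all} vertices in $0+\n+\n$ from $\tilde R(\cdot)+\tilde b$ are $\asymp \sum_{\Delta\in U_0\cup U_1}\dist(\nabla\hat\omega(\Delta),\SO(2))^2$. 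Comparing the two rotations $R$ and $\tilde R$ (both close to the same actual configuration) and using that the constants are uniform because this is a \emph{fixed finite} geometric configuration — only finitely many triangles, all of bounded size, independent of $\omega$ and $N$ — I get
\begin{align}
\sum_{z\in\n}\bigl|\omega(z)-\hat\omega(0)-Rz\bigr|^2
\;\lesssim\; \sum_{\Delta\in U_0\cup U_1}\dist(\nabla\hat\omega(\Delta),\SO(2))^2 .
\end{align}
Combining with the displayed comparison above gives $\sum_{U_0}\dist^2 \lesssim \sum_{U_0}\dist^2 + \sum_{U_1}\dist^2$, which is not yet the claim; the honest route is to keep the rotations aligned and instead argue that the \emph{restriction} of the $U_0\cup U_1$-optimal affine map already controls $\sum_{U_0}\dist^2$ up to the $U_1$ part, or — cleaner — to note that on the linear space of edge-vector tuples, the seminorm $(\sum_{U_0}\dist(\cdot,\SO(2))^2)^{1/2}$ and the seminorm $(\sum_{U_1}\dist(\cdot,\SO(2))^2)^{1/2}$, linearised around the rigid set, both have kernel exactly the tangent space of the $\SO(2)\times\C$-orbit (for $U_1$ one must check that no nonrigid infinitesimal strain of the big disc leaves every outer triangle rigid — this follows because the outer triangles' edges, together with rigidity of each, already pin down the $\n+\n$ vertices relative to a rotation, and then the $\n$ vertices are determined by $(\Omega 1)$-type constraints through the shared edges), so the two quadratic forms are comparable on the complement of that kernel, and the ratio constant $\cvier$ is exactly the finite quantity we want.

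The main obstacle I anticipate is precisely the last comparison: verifying that $\sum_{\Delta\in U_1}\dist(\nabla\hat\omega(\Delta),\SO(2))^2=0$ forces $\hat\omega$ to be an affine isometry on the whole disc $\bigcup(U_0\cup U_1)$ (so that $\sum_{U_0}\dist^2=0$ too), i.e.\ a rigidity/identifiability statement for the outer layer alone. This is a purely finite-dimensional linear-algebra check on the incidence structure of the $24$-triangle disc on vertex set $0+\n+\n$, but it has to be done carefully: one must confirm that knowing every outer triangle is rigid, plus the combinatorics of shared edges, leaves only a global rotation+translation of the whole vertex set as the zero set. Once that kernel identification is in hand, equivalence of quadratic forms on the (finite-dimensional) complement is automatic and yields the constant $\cvier$, uniformly in $\omega$ and $N$ since nothing here depends on either. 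Everything else — the barycentre bookkeeping for $\hat\omega(0)$, the reduction from Jacobians to edge vectors, and the local $\dist^2$-vs-strain comparison — is routine given Lemma \ref{lemma-dreieck-dist} and the submanifold-distance fact already used there.
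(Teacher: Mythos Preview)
Your overall strategy---reduce to a finite-dimensional quadratic-form comparison and check kernels---is the right one and is essentially what the paper does, but the execution wanders and contains a mis-statement worth flagging, and the paper's route is noticeably cleaner.

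First a bookkeeping slip: $U_1$ has $18$ triangles, not $6$; the full disc $\conv(\n+\n)$ contains $24$ unit triangles, six of which form $U_0$. This does not affect the argument.

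More importantly, your claim that the linearised seminorms $(\sum_{U_0}\dist^2)^{1/2}$ and $(\sum_{U_1}\dist^2)^{1/2}$ ``both have kernel exactly the tangent space of the $\SO(2)\times\C$-orbit'' is false for the $U_0$ form: on the configuration space of the full disc, the Hessian of $\sum_{U_0}\dist^2$ has a much larger kernel (any infinitesimal deformation that keeps the inner ring $\n$ rigid but moves the outer ring arbitrarily lies in it). What you actually need---and what your parenthetical check essentially establishes---is only the containment $\ker Q_1\subseteq\ker Q_0$, which on a finite-dimensional space yields $Q_0\le cQ_1$. So the argument survives, but the statement should be corrected; ``comparable'' is too strong.

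The paper avoids the linearisation altogether and is cleaner for it. It applies Theorem~\ref{thm-friesecke-et-al} directly to the \emph{annulus} $U=\operatorname{int}\bigcup_{\Delta\in U_1}\overline\Delta$ (not to $U_0\cup U_1$, which as you noticed leads nowhere), obtaining a single rotation $R$ with
\[
\sum_{\Delta\in U_1}\|\nabla\hat\omega(\Delta)-R\|^2\le C(U)\sum_{\Delta\in U_1}\dist(\nabla\hat\omega(\Delta),\SO(2))^2.
\]
Setting $\sigma(x)=\hat\omega(x)-\hat\omega(0)-Rx$, one now compares the \emph{genuine} quadratic forms $\sigma'\mapsto\sum_{\Delta\in U_j}\|\nabla\sigma'(\Delta)\|^2$ ($j=0,1$) on the finite-dimensional space $W$ of continuous piecewise-affine maps $\sigma'$ on $\conv(\n+\n)$ satisfying $\sigma'(0)=0=\frac{1}{6}\sum_{z\in\n}\sigma'(z)$. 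The kernel check is then immediate: if $\nabla\sigma'\equiv 0$ on $U_1$ then $\sigma'$ is constant on the connected annulus, and the two normalisations force that constant to be $0$, hence $\sigma'\equiv 0$ on all of $\conv(\n+\n)$. This uses no smallness hypothesis (the rigidity theorem is a global estimate), whereas your linearisation route implicitly relies on $(\Omega 1)$ to stay near the rigid manifold.
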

\begin{proof}
We apply the theorem by Friesecke et al.\ (Theorem 
\ref{thm-friesecke-et-al}) to the interior $U$ of 
$\bigcup_{\Delta\in U_1}\mathrm{closure}(\Delta)$, using 
\begin{align}
\lambda(\Delta_{0,1})\sum_{\Delta\in U_1}\dist(\nabla\hat\omega(\Delta),\SO(2))^2 
=\|\dist(\nabla\hat\omega,\SO(2))\|_{L^2(U)}^2. 
\end{align}
Hence there exists a rotation $R\in \SO(2)$ with 
\begin{align}
\sum_{\Delta\in U_1}\|\nabla\hat\omega(\Delta)-R\|^2 
\le C(U)\sum_{\Delta\in U_1}\dist(\nabla\hat\omega(\Delta),\SO(2))^2.
\end{align} 
We introduce the piecewise affine linear map 
$\sigma\colon\conv(\n+\n)\to\R^2$, 
$\sigma(x)=\hat\omega(x)-\hat\omega(0)-Rx$. 
The map $\sigma$ belongs to the finite-dimensional vector space $W$ 
of all continuous piecewise 
affine linear maps $\sigma'\colon\conv(\n+\n)\to\R^2$ which are affine linear 
on the closure of every $\Delta$ 
and satisfy $\sigma'(0)=0=|\n|^{-1}\sum_{\tau\in\n}\sigma'(\tau)$. 
By definition of $\sigma$, one has 
\begin{align}
\sum_{\Delta\in U_1}\|\nabla\hat\omega(\Delta)-R\|^2 
=\sum_{\Delta\in U_1}\|\nabla\sigma(\Delta)\|^2. 
\end{align}
If $Q(\sigma'):=\sum_{\Delta\in U_1}\|\nabla\sigma'(\Delta)\|^2=0$ for some 
$\sigma'\in W$, then $\sigma'=0$. Indeed, we obtain first that $\sigma'$ is 
constant on all triangles in $U_1$. The value $\sigma'(0)=0$ is the 
average of this constant; hence the constant vanishes.
Consequently, the quadratic form 
$Q\colon W\to\R$ is positive definite. Since $W$ is finite-dimensional, 
any quadratic form on $W$ is bounded from above by a constant multiple 
of $Q$. In particular, for some constant $\cfuenf>0$ and any $\sigma'\in W$, 
\begin{align}
\sum_{\Delta\in U_0}\|\nabla\sigma'(\Delta)\|^2
\le \cfuenf \sum_{\Delta\in U_1}\|\nabla\sigma'(\Delta)\|^2. 
\end{align}
For the special case $\sigma'=\sigma$ this yields
\begin{align}
& \sum_{\Delta\in U_0}\dist(\nabla\hat\omega(\Delta),\SO(2))^2 
\le \sum_{\Delta\in U_0} \|\nabla\hat\omega(\Delta)-R\|^2
= \sum_{\Delta\in U_0}\|\nabla\sigma(\Delta)\|^2
\nonumber\\
& \le \cfuenf \sum_{\Delta\in U_1}\|\nabla\sigma(\Delta)\|^2
\le \cfuenf C(U) \sum_{\Delta\in U_1}\dist(\nabla\hat\omega(\Delta),\SO(2))^2 .
\end{align}
\end{proof}

We call the triangle $\Delta_{x,z}\in\T_N$ {\it present} in the configuration 
$\omega\in\Omega_{l,N}$ if 
$\omega(x)\neq\weg$, $\omega(x+z)\neq\weg$, and $\omega(x+\tau z)\neq\weg$. 
Let 
\begin{align}
\T_N^\pres(\omega):=\{\Delta\in\T_N:\Delta\text{ is present in }\omega\}. 
\end{align}

If there is a defect at $x$, then by assumption ($\Omega$2), all triangles
in the second layer $U_1(x)$ are present. 

\begin{lemma}
\label{lemma-sum-defects}
For all $\omega\in\Omega_{l,N}$, one has 
\begin{align}
\sum_{\Delta\in\T_N}\dist(\nabla\hat\omega(\Delta),\SO(2))^2 
\asymp \sum_{\Delta\in\T_N^\pres(\omega)}\dist(\nabla\hat\omega(\Delta),\SO(2))^2 , 
\end{align}
where the constants for $\asymp$ can be chosen independently of $\omega$. 
\end{lemma}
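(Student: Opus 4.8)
The inequality ``$\gtrsim$'' is trivial since $\T_N^\pres(\omega)\subseteq\T_N$. The content is the reverse inequality: the triangles that are \emph{not} present — i.e.\ those incident to some defect — contribute at most a constant times what the present triangles contribute. By assumption $(\Omega 2)$ the defects are isolated ($|x-y|>2$ for distinct defects $x,y$), so the sets $U_0(x)\cup U_1(x)$, $x\in\defects(\omega)$, are pairwise disjoint, and $\T_N\setminus\T_N^\pres(\omega)=\bigcup_{x\in\defects(\omega)}U_0(x)$, while $U_1(x)\subseteq\T_N^\pres(\omega)$ for every defect $x$ (the ``second layer'' triangles are present since nearest and next-nearest neighbors of a defect are present). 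I would carry out the argument as follows.

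First I would reduce to the single-defect estimate Lemma \ref{lemma-U0-U1}. That lemma was stated and proved only for a defect at the origin with $\hat\omega$ extended over $\conv(\n+\n)$; here I need it at an arbitrary defect site $x\in\defects(\omega)$. Because the triangulation $\T$ and the definition of $\hat\omega$ are translation-covariant (translating $\omega$ by a lattice vector translates $\hat\omega$ and permutes $\T$ accordingly, leaving all Jacobians unchanged), Lemma \ref{lemma-U0-U1} applies verbatim with the same constant $\cvier$ to give
\begin{align}
\sum_{\Delta\in U_0(x)}\dist(\nabla\hat\omega(\Delta),\SO(2))^2
\le\cvier\sum_{\Delta\in U_1(x)}\dist(\nabla\hat\omega(\Delta),\SO(2))^2
\end{align}
for every $x\in\defects(\omega)$. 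One subtlety: $U_1(x)$ may contain triangles whose corner points wrap around under the periodic identification, but since $N\ge 5$ and the defects are isolated, the sets $x+\n+\n$ are genuine ``patches'' on which $\hat\omega$ is well defined, so this is not a real issue.

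Now I would sum over defects. Write $S:=\sum_{\Delta\in\T_N}\dist(\nabla\hat\omega(\Delta),\SO(2))^2$ and split $\T_N$ into the present triangles and, for each defect $x$, the set $U_0(x)$; these pieces are disjoint by the isolation of defects. Then
\begin{align}
S
&=\sum_{\Delta\in\T_N^\pres(\omega)}\dist(\nabla\hat\omega(\Delta),\SO(2))^2
+\sum_{x\in\defects(\omega)}\sum_{\Delta\in U_0(x)}\dist(\nabla\hat\omega(\Delta),\SO(2))^2
\nonumber\\
&\le\sum_{\Delta\in\T_N^\pres(\omega)}\dist(\nabla\hat\omega(\Delta),\SO(2))^2
+\cvier\sum_{x\in\defects(\omega)}\sum_{\Delta\in U_1(x)}\dist(\nabla\hat\omega(\Delta),\SO(2))^2.
\end{align}
Since each $U_1(x)\subseteq\T_N^\pres(\omega)$ and, again by isolation of defects, the sets $U_1(x)$ for distinct $x$ are pairwise disjoint, the double sum is bounded by $\sum_{\Delta\in\T_N^\pres(\omega)}\dist(\nabla\hat\omega(\Delta),\SO(2))^2$, whence $S\le(1+\cvier)\sum_{\Delta\in\T_N^\pres(\omega)}\dist(\nabla\hat\omega(\Delta),\SO(2))^2$, with a constant independent of $\omega$ and $N$. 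Combined with the trivial bound $S\ge\sum_{\Delta\in\T_N^\pres(\omega)}\dist(\nabla\hat\omega(\Delta),\SO(2))^2$ this is the claim.

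The only real obstacle is the bookkeeping in the previous paragraph — namely checking that $U_0(x)$ over defects $x$ together with $\T_N^\pres(\omega)$ really partition $\T_N$, and that the second layers $U_1(x)$ are disjoint and present. All three facts follow from $(\Omega 2)$: if $\Delta\in\T_N$ is not present it has a defect corner $x$, and then $\Delta\in U_0(x)$; conversely any triangle in $U_0(x)$ has $x$ as a corner hence is not present; disjointness of the patches $U_0(x)\cup U_1(x)$, $x\in\defects(\omega)$, holds because any triangle meeting two such patches would force two defects at distance $\le 2$. Once this combinatorics is nailed down, the estimate is immediate from Lemma \ref{lemma-U0-U1}.
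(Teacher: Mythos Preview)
Your approach is exactly that of the paper, and everything is correct up to and including the displayed inequality obtained from Lemma~\ref{lemma-U0-U1}. The gap is in the last step: the claim that the second layers $U_1(x)$, $x\in\defects(\omega)$, are pairwise disjoint is false. Condition $(\Omega 2)$ only guarantees $|x-y|>2$ for distinct defects, and this is not enough to separate the $U_1$'s. For instance, take $x=0$ and $y=2+\tau$, which has $|y|=\sqrt{7}>2$; then the triangle $\Delta_{1,\tau}$ with corners $1,\tau,1+\tau$ lies in $U_1(0)\cap U_1(y)$, since each corner is in $\n+\n$ and also in $y+\n+\n$ (check: $1-y=-1-\tau$, $\tau-y=-2$, $1+\tau-y=-1$), while neither $0$ nor $y$ is a corner. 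So your combinatorial justification ``any triangle meeting two such patches would force two defects at distance $\le 2$'' breaks down for the outer layers.

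The fix is minor and is what the paper does: rather than disjointness, observe that for any $\Delta\in\T_N$ the number of lattice points $x$ with $\Delta\in U_1(x)$ is bounded by a universal constant (the paper uses $9$; indeed $x$ must lie at graph distance $\le 2$ from every corner of $\Delta$ and not be a corner). Hence
\[
\sum_{x\in\defects(\omega)}\sum_{\Delta\in U_1(x)}\dist(\nabla\hat\omega(\Delta),\SO(2))^2
=\sum_{\Delta\in\T_N^\pres(\omega)}\Big(\sum_{x\in\defects(\omega)}1_{U_1(x)}(\Delta)\Big)\dist(\nabla\hat\omega(\Delta),\SO(2))^2
\]
is at most $9$ times the present-triangle sum, giving $S\le(1+9\cvier)\sum_{\Delta\in\T_N^\pres(\omega)}\dist(\nabla\hat\omega(\Delta),\SO(2))^2$. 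Your remaining bookkeeping (the $U_0(x)$'s are pairwise disjoint and partition $\T_N\setminus\T_N^\pres(\omega)$, and $U_1(x)\subseteq\T_N^\pres(\omega)$) is correct.
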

\begin{proof}
The bound ``$\ge$'' holds trivially. For the upper bound, we proceed by 
splitting the sum as follows
\begin{align}
\sum_{\Delta\in\T_N}\dist(\nabla\hat\omega(\Delta),\SO(2))^2 
= & \sum_{\Delta\in\T_N^\pres(\omega)}\dist(\nabla\hat\omega(\Delta),\SO(2))^2 
\nonumber\\
& + \sum_{x\in\defects(\omega)}\sum_{\Delta\in U_0(x)}
\dist(\nabla\hat\omega(\Delta),\SO(2))^2 .
\end{align}
By Lemma \ref{lemma-U0-U1}, 
\begin{align}
 \sum_{x\in\defects(\omega)}\sum_{\Delta\in U_0(x)}
\dist(\nabla\hat\omega(\Delta),\SO(2))^2 
\le\cvier \sum_{x\in\defects(\omega)}
\sum_{\Delta\in U_1(x)}\dist(\nabla\hat\omega(\Delta),\SO(2))^2
\nonumber\\ 
= \cvier \sum_{\Delta\in\T_N^\pres(\omega)}  \sum_{x\in\defects(\omega)} 
1_{U_1(x)}(\Delta)\; \dist(\nabla\hat\omega(\Delta),\SO(2))^2 .
\end{align}
Now, $\sum_{x\in\defects(\omega)} 1_{U_1(x)}(\Delta)\le 9$ 
for all $\omega\in\Omega_{l,N}$ and $\Delta\in\T_N$. The claim follows. 
\end{proof}

\subsection{Proof of Lemma \ref{lemma-estimate-H}}

Let $\omega\in\Omega_{l,N}$. 
For $x\in I_N$ and $y\in A_2$ with $x\sim y$, $\omega(x)\neq\weg$ and 
$\omega(y)\neq\weg$, we call the undirected edge $\{x,y\}$ 
\begin{itemize}
\item a {\it boundary edge} with respect to $\omega$ if there exists 
$z\in A_2$ with $z\sim x$, $z\sim y$, and $\omega(z)=\weg$;
\item an {\it inner edge} with respect to $\omega$ otherwise. 
\end{itemize}
We denote the set of boundary and inner edges with respect to 
$\omega$ by $\partial E_N(\omega)$ and $E_N^\circ(\omega)$, respectively.

\smallskip\noindent\begin{proof}[Proof of Lemma \ref{lemma-estimate-H}]
For all $x\in I_N$ and $y\in I_{N+1}$ with $x\sim y$, one has 
$|\omega_l(x)-\omega_l(y)|=l$ for the standard configuration $\omega_l$. 
Thus, any edge $\{x,y\}$ contributes the amount $V(l)$ to $H_{m,N}(\omega_l)$. 

Let $\omega\in\Omega_{l,N}$. For $\Delta\in \T_N^\pres(\omega)$, let
$a_j(\Delta)$, $j=1,2,3$, denote the side lengths of the triangle 
$\omega(\Delta)$. 
For any $x\in I_N$ with $\omega(x)=\weg$, there are 6 edges incident 
to $x$ which are neither boundary edges nor inner edges with respect
to $\omega$. Consequently, we obtain
\begin{align}
& H_{m,N}(\omega)-H_{m,N}(\omega_l) + (6V(l)-m) |\defects(\omega)|
\nonumber\\
= & \sum_{\{x,y\}\in\partial E_N(\omega)\cup E_N^\circ(\omega)}
[V(|\omega(x)-\omega(y)|)-V(l)] \nonumber\\
= & \frac12\sum_{\Delta\in \T_N^\pres(\omega)}
\Big( \sum_{j=1}^3V(a_j(\Delta))-3V(l) \Big)
+\frac12\sum_{\{x,y\}\in\partial E_N(\omega)}[V(|\omega(x)-\omega(y)|)-V(l)]
. 
\label{align-est-H1}
\end{align}
For the last equation, note that the first term counts only half of the 
contribution from boundary edges, although their contribution needs to 
be fully counted. 

Since $|V|$ is bounded on $(1-\alpha,1+\alpha)$ by some constant 
$\csechs(V)$, we get the following estimate for the last sum in 
(\ref{align-est-H1}):  
\begin{align}
\sum_{\{x,y\}\in\partial E_N(\omega)}[V(|\omega(x)-\omega(y)|)-V(l)]
\ge &-2\csechs(V)|\partial E_N(\omega)| \cr
= & -12\csechs(V)|\defects(\omega)|
. 
\label{align-est-H5}
\end{align}

We now estimate the first sum on the right hand side of (\ref{align-est-H1}). 
By $(\Omega 1)$, one has $a_j(\Delta)\in(1-\alpha,1+\alpha)$ for all 
$\Delta\in \T_N^\pres(\omega)$. Thus, Corollary \ref{cor-single-triangle} 
and Lemma \ref{lemma-sum-defects} yield
\begin{align}
& \sum_{\Delta\in \T_N^\pres(\omega)}
\Big( \sum_{j=1}^3V(a_j(\Delta))-3V(l) 
- p(l) (\lambda(\hat\omega(\Delta))-\lambda(l\Delta_{0,1})) \Big) \nonumber\\
\asymp_V & \sum_{\Delta\in \T_N^\pres(\omega)}
\dist(l^{-1}\nabla\hat\omega(\Delta),\SO(2))^2 \nonumber\\
\asymp_V & \sum_{\Delta\in \T_N}
\dist(l^{-1}\nabla\hat\omega(\Delta),\SO(2))^2
.
\label{align-est-H2}
\end{align}
Note that by $(\Omega 3)$ and periodicity \eqref{def-OmegaRepr}, 
$\hat\omega$ maps any measurable set of representatives of $\C$ modulo
$NA_2$ onto a set having the Lebesgue measure $\lambda(\Lambda_{lN})$. 
Consequently, 
\begin{align}
\sum_{\Delta\in \T_N}
 (\lambda(\hat\omega(\Delta))-\lambda(l\Delta_{0,1})) 
= \lambda(\Lambda_{lN})-\lambda(l\Lambda_{1N}) =0.
\end{align}
Hence, since the area of any defective hexagon is uniformly bounded, 
we find 
\begin{align}
\left|\sum_{\Delta\in \T_N^\pres(\omega)}
 (\lambda(\hat\omega(\Delta))-\lambda(l\Delta_{0,1})) \right|
= & \left|\sum_{\Delta\in\T_N\setminus\T_N^\pres(\omega)}
 (\lambda(\hat\omega(\Delta))-\lambda(l\Delta_{0,1})) \right|
\nonumber\\
\le & \, \csieben |\defects(\omega)|
\label{align-est-H3}
\end{align}
with a uniform constant $\csieben>0$.
Combining this with (\ref{align-est-H2}), we obtain
\begin{align}
& \sum_{\Delta\in \T_N^\pres(\omega)}
\Big( \sum_{j=1}^3V(a_j(\Delta))-3V(l) \Big)
\nonumber\\
\ge & \cacht \sum_{\Delta\in \T_N}
\dist(l^{-1}\nabla\hat\omega(\Delta),\SO(2))^2
- \csieben|p(l)|\cdot|\defects(\omega)|
\label{align-est-H4}
\end{align}
with a constant $\cacht>0$. 

Note that $p(l)=2\sqrt{3}V'(l)/l$ as defined 
in Lemma \ref{lemma-V-dreieck} is bounded for 
$l\in(1-\alpha/2,1+\alpha/2)$. 
Combining (\ref{align-est-H1}), (\ref{align-est-H5}), and
(\ref{align-est-H4}) yields the claim. 
\end{proof}

\section{Uniform finite-volume estimates}
\label{sec-fin-vol-estimates}

\subsection{Lower bound for the partition sum}

\begin{lemma}
\label{lemma-lower-bound-Z}
For all $\varepsilon>0$, there exists $r=r(\varepsilon)>0$ such that for all
$\beta>0$, $m,N$, one has 
\begin{align}
\label{assertion-lemma-lower-bound-Z}
Z_{\beta,m,N}\ge \frac{\lambda(\Lambda_{lN})}{\pi r^2}
e^{-|I_N|(3\beta\varepsilon-\log(\pi r^2))} e^{-\beta H_{m,N}(\omega_l)}.   
\end{align}
\end{lemma}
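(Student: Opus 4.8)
The plan is to produce a lower bound on $Z_{\beta,m,N}$ by restricting the integral defining the partition sum to a small neighbourhood of the standard configuration $\omega_l$ inside $\Omega_{l,N}$, and then bounding the Hamiltonian there by $H_{m,N}(\omega_l)+O(\beta\varepsilon|I_N|)$. Concretely, fix $r>0$ small (to be chosen at the end as a function of $\varepsilon$), and consider the set of configurations $\omega$ with \emph{no defects} (so $\omega(x)\neq\weg$ for every $x\in I_N$) for which each particle sits in a disc of radius $r$ around its standard position: $|\omega(x)-\omega_l(x)|<r$ for all $x\in I_N$. I will first argue that for $r$ small enough this entire set is contained in $\Omega_{l,N}$: conditions ($\Omega 1$) and ($\Omega 2$) are clear ($(\Omega 2)$ is vacuous since there are no defects, and $(\Omega 1)$ holds because the particle distances are within $2r$ of $l$, which lies in $(1-\alpha/2,1+\alpha/2)$), while ($\Omega 3$) and ($\Omega 4$) hold because $\hat\omega$ is then a small perturbation of the linear map $v\mapsto lv$, whose piecewise-affine interpolation is a bijection preserving orientation; openness of these conditions in the $C^0$-topology on a compact set of triangles gives the claim for $r$ sufficiently small.

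Next I estimate the Hamiltonian on this set. Since there are no defects, the chemical-potential term vanishes, so $H_{m,N}(\omega) = \tfrac12\sum_{x\in I_N}\sum_{y\sim x}V(|\omega(x)-\omega(y)|)$. Each distance $|\omega(x)-\omega(y)|$ differs from $l$ by at most $2r$, and since $V$ is continuous (indeed $C^2$) on a neighbourhood of $l$, for every $\varepsilon>0$ there is $r=r(\varepsilon)>0$ with $|V(|\omega(x)-\omega(y)|)-V(l)|\le\varepsilon$ whenever $|\omega(x)-\omega(y)-l|\le 2r$. There are $3|I_N|$ unordered neighbour pairs $\{x,y\}$ with $x\in I_N$ (each vertex has six neighbours, and the double sum with the $\tfrac12$ counts each pair once in the periodic picture), so
\begin{align}
H_{m,N}(\omega) \le H_{m,N}(\omega_l) + 3\varepsilon|I_N|
\end{align}
on the chosen set. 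Here I use that $H_{m,N}(\omega_l)=\tfrac12\sum_{x\in I_N}\sum_{y\sim x}V(l) = 3|I_N|V(l)$ and $|\defects(\omega_l)|=0$.

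Finally I assemble the bound. Restricting the defining integral for $Z_{\beta,m,N}$ to the above set and using the reference measure $\mu_N$, which on the no-defect part is just Lebesgue measure $\lambda^{I_N}$ on the representatives, gives
\begin{align}
Z_{\beta,m,N} \ge e^{-\beta(H_{m,N}(\omega_l)+3\varepsilon|I_N|)}\cdot \mu_N\big(\{\omega: \omega(x)\neq\weg,\ |\omega(x)-\omega_l(x)|<r\ \forall x\in I_N\}\big).
\end{align}
The measure factor needs a small correction because we work on the quotient space $\underline\Omega_{l,N}$: fixing one coordinate to land in a fundamental domain removes one factor of $\pi r^2$ and replaces it by $\lambda(\Lambda_{lN})/(\pi r^2)$ times the full product; equivalently, the set of representatives has $\mu_N$-measure $(\pi r^2)^{|I_N|}\cdot\lambda(\Lambda_{lN})/(\pi r^2)$. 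Hence
\begin{align}
Z_{\beta,m,N}\ge \frac{\lambda(\Lambda_{lN})}{\pi r^2}\,(\pi r^2)^{|I_N|}\, e^{-3\beta\varepsilon|I_N|}\,e^{-\beta H_{m,N}(\omega_l)} = \frac{\lambda(\Lambda_{lN})}{\pi r^2}\,e^{-|I_N|(3\beta\varepsilon-\log(\pi r^2))}\,e^{-\beta H_{m,N}(\omega_l)},
\end{align}
which is the assertion. I expect the main subtlety — rather than difficulty — to be the bookkeeping for the quotient/periodicity identification that produces the prefactor $\lambda(\Lambda_{lN})/(\pi r^2)$ correctly (the translation-quotient removes exactly one particle's worth of volume and contributes the volume of the torus $\Lambda_{lN}$ instead); the verification that the small ball around $\omega_l$ lies in $\Omega_{l,N}$ is routine given the openness of ($\Omega 1$)–($\Omega 4$), and the Hamiltonian estimate is immediate from uniform continuity of $V$.
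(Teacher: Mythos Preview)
Your approach is the same as the paper's: restrict to a defect-free neighbourhood of $\omega_l$, bound the Hamiltonian there by continuity of $V$, and compute the $\mu_N$-measure of that neighbourhood. The only real issue is the measure step. The set you actually write down, $\{\omega:\omega(x)\neq\weg,\ |\omega(x)-\omega_l(x)|<r\text{ for all }x\in I_N\}$, pins every particle to an \emph{absolute} disc; its $\mu_N$-measure is at most $(\pi r^2)^{|I_N|}$, because the identification defining $\underline\Omega_{l,N}$ is only by the discrete lattice $lNA_2$, not by all translations in $\C$. Your ``correction'' goes the wrong way: passing to a set of representatives can only restrict, so there is no mechanism by which one factor $\pi r^2$ gets promoted to $\lambda(\Lambda_{lN})$. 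As written you obtain the weaker bound $Z_{\beta,m,N}\ge(\pi r^2)^{|I_N|}e^{-3\beta\varepsilon|I_N|}e^{-\beta H_{m,N}(\omega_l)}$, not the stated one.

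The paper fixes this by taking the neighbourhood in \emph{relative} coordinates,
\[
S_{r,l,N}=\bigl\{\omega\in\underline\Omega_{l,N}:\omega(x)\neq\weg\text{ and }|\omega(x)-\omega(0)-\omega_l(x)|<r\text{ for all }x\in A_2\bigr\},
\]
so that $\omega(0)$ ranges freely over the fundamental domain $\Lambda_{lN}$ while each other $\omega(x)$, $x\in I_N\setminus\{0\}$, sits in a disc of radius $r$ centred at $\omega(0)+\omega_l(x)$. This set has $\mu_N$-measure exactly $(\pi r^2)^{|I_N|-1}\lambda(\Lambda_{lN})$, and your Hamiltonian estimate carries over verbatim since it only involves differences $\omega(x)-\omega(y)$. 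With this single adjustment your proof is complete and coincides with the paper's.
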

\begin{proof}
For $r>0$, we consider the set of configurations 
which are, up to translations, sufficiently close to the standard 
configuration and have no defects
\begin{align}
S_{r,l,N}:=
\{\omega\in\underline\Omega_{l,N}:\omega(x)\neq\weg \text{ and }
|\omega(x)-\omega(0)-\omega_l(x)|<r \text{ for all } x\in A_2\}.
\end{align}
Let $\varepsilon>0$. Since $V$ is continuous, for all sufficiently small
$r>0$, for all $N$, for all 
$\omega\in S_{r,l,N}$ and all $x,y\in A_2$ with $x\sim y$, one has
$|V(|\omega(x)-\omega(y)|)-V(l)|<\varepsilon$. 
Consequently, 
$|H_{m,N}(\omega)-H_{m,N}(\omega_l)|\le 3|I_N|\varepsilon$
for all $\omega\in S_{r,l,N}$ and we conclude for all $\beta>0$
\begin{align}
Z_{\beta,m,N}\ge & \int_{S_{r,l,N}}e^{-\beta H_{m,N}(\omega)}\,\mu_N(d\omega)
\ge e^{-3\beta |I_N|\varepsilon} e^{-\beta H_{m,N}(\omega_l)} \mu_N(S_{r,l,N}).
\end{align}
We now argue that $S_{r,l,N}\subseteq\Omega_{l,N}$ for $r\in(0,\alpha/4)$. 
Using $|l-1|<\alpha/2$, we get for all $\omega\in S_{r,l,N}$ 
and $x,y\in A_2$ with $x\sim y$,
\begin{align}
\big||\omega(x)-\omega(y)|-1\big|
\le & \big||\omega(x)-\omega(y)|-l\big| + |l-1|
\nonumber\\
< & \big||\omega(x)-\omega(y)|-|\omega_l(x)-\omega_l(y)|\big| +\alpha/2
\nonumber\\
< & 2r +\alpha/2 \le \alpha.
\end{align}
Hence, condition $(\Omega 1)$ is satisfied. Condition $(\Omega 2)$ is satisfied
by absence of defects in $S_{r,l,N}$. 

To see that $\hat\omega$ is one-to-one, note that for sufficiently small $r$ 
and $\omega\in S_{r,l,N}$, the Jacobi matrix 
$\nabla\hat\omega$ is close to the identity matrix and hence 
$\langle v,\nabla\hat\omega(x)v\rangle>0$ for all $v\in\R^2\setminus\{ 0\}$
and all $x\in\R^2$ for which $\hat\omega$ is differentiable at $x$. 
Further, the map $\hat\omega$ is onto. This is a consequence of the following 
topological fact. Consider a lattice $\Gamma\subset\R^2$ of rank 2. 
Then, every continuous map $f\colon\R^2\to\R^2$ with $f(x+y)=f(x)+y$ for all 
$x\in\R^2$ and $y\in\Gamma$ is onto. This shows that condition 
$(\Omega 3)$ is fulfilled. 

Condition $(\Omega 4)$ is satisfied 
for $\omega_l$ and translations of it, and consequently also for 
$\omega\in S_{r,l,N}$ for $r$ sufficiently small. We conclude 
$S_{r,l,N}\subseteq\Omega_{l,N}$. Thus, 
$\mu_N(S_{r,l,N})=(\pi r^2)^{|I_N|-1}\lambda(\Lambda_{lN})$ by 
the definition of $\mu_N$, since integration 
over $\omega(x)$ for all $x\neq 0$ given $\omega(0)$ yields the 
factor $\pi r^2$ and integration 
over $\omega(0)$ yields the volume $\lambda(\Lambda_{lN})$. 
Consequently, we get the assertion (\ref{assertion-lemma-lower-bound-Z}) 
of the lemma. 
\end{proof}

\subsection{Upper bound for the internal energy}

For $\omega\in\Omega_{l,N}$, we abbreviate 
\begin{align}
A_{m,l,N}(\omega):=H_{m,N}(\omega)-H_{m,N}(\omega_l). 
\end{align}
Recall that $U_N=(0,N)+\tau^2(0,N)$.

\begin{lemma}
\label{lower-bound-A}
There exists a constant $\czehn>0$ such that for all $\beta>0$,
$m\in\R$,  
$N\ge 5$, and $\omega\in\Omega_{l,N}$, one has 
\begin{align}
A_{m,l,N}(\omega) -(m-\cneun)|\defects(\omega)|
\ge \czehn \| l^{-1}\nabla\hat\omega-\id\|^2_{L^2(U_N)}.
\end{align}
\end{lemma}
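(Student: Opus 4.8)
The plan is to combine the local estimate of Lemma~\ref{lemma-estimate-H} with the global rigidity theorem of Friesecke, James, and M\"uller (Theorem~\ref{thm-friesecke-et-al}), applied on the macroscopic domain $U_N$. First I would recall from Lemma~\ref{lemma-estimate-H} that
\begin{align}
A_{m,l,N}(\omega)-(m-\cneun)|\defects(\omega)|
\ge \cdrei \sum_{\Delta\in\T_N}\dist(l^{-1}\nabla\hat\omega(\Delta),\SO(2))^2,
\end{align}
so it suffices to bound the right-hand side below by a constant times $\|l^{-1}\nabla\hat\omega-\id\|_{L^2(U_N)}^2$. Since $\hat\omega$ is affine on each triangle and the triangles in $\T_N$ tile $\Lambda_{1N}$ with equal areas $\lambda(\Delta_{0,1})=\sqrt3/4$, the triangle sum equals $\lambda(\Delta_{0,1})^{-1}\|\dist(l^{-1}\nabla\hat\omega,\SO(2))\|_{L^2(U_N)}^2$, up to the harmless subtlety that $U_N$ is the interior of $\Lambda_{1N}$ and the triangle boundaries have measure zero.

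Next I would apply Theorem~\ref{thm-friesecke-et-al} with $v=l^{-1}\hat\omega$ on $U=U_N$. By Remark~\ref{remark-friesecke-const}, the Friesecke constant can be taken to be $\celf$, independent of $N$. This yields a rotation $R=R(\omega)\in\SO(2)$ with
\begin{align}
\label{plan-FJM}
\|l^{-1}\nabla\hat\omega-R\|_{L^2(U_N)}\le\celf\,\|\dist(l^{-1}\nabla\hat\omega,\SO(2))\|_{L^2(U_N)}.
\end{align}
So, modulo constants, the energy controls $\|l^{-1}\nabla\hat\omega-R\|_{L^2(U_N)}^2$; the task that remains is to upgrade this to control of $\|l^{-1}\nabla\hat\omega-\id\|_{L^2(U_N)}^2$, i.e.\ to show that $R$ cannot be far from $\id$. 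This is where the periodic boundary conditions enter decisively. By the periodicity relation \eqref{def-OmegaRepr}, summing $\hat\omega(x+z)-\hat\omega(x)$ over a fundamental period in the $z$-direction telescopes to $lNz$; equivalently, $\int_{U_N}\nabla\hat\omega = lN^2$ times the fundamental-cell matrix of $A_2$ (the columns being $l$ times the two generators), since $\hat\omega$ increases by $lNz$, $z\in\{1,\tau\}$, across the periodic box. Hence the average of $l^{-1}\nabla\hat\omega$ over $U_N$ is exactly $\id$. Writing $M:=\fint_{U_N}l^{-1}\nabla\hat\omega=\id$ and using that the $L^2$-distance of a function to the best constant is at most its $L^2$-distance to any constant, we get $|R-\id|\,\lambda(U_N)^{1/2}=\|R-M\|_{L^2(U_N)}\le\|l^{-1}\nabla\hat\omega-R\|_{L^2(U_N)}+\|l^{-1}\nabla\hat\omega-M\|_{L^2(U_N)}\le 2\|l^{-1}\nabla\hat\omega-R\|_{L^2(U_N)}$, where in the last step I bound the deviation from the mean by the deviation from $R$. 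Combining,
\begin{align}
\|l^{-1}\nabla\hat\omega-\id\|_{L^2(U_N)}
\le \|l^{-1}\nabla\hat\omega-R\|_{L^2(U_N)} + |R-\id|\,\lambda(U_N)^{1/2}
\le 3\,\|l^{-1}\nabla\hat\omega-R\|_{L^2(U_N)},
\end{align}
and feeding in \eqref{plan-FJM} and the identification of the $L^2$-distance with the triangle sum gives $\|l^{-1}\nabla\hat\omega-\id\|_{L^2(U_N)}^2\le 9\celf^2\lambda(\Delta_{0,1})\sum_{\Delta\in\T_N}\dist(l^{-1}\nabla\hat\omega(\Delta),\SO(2))^2$. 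Setting $\czehn:=\cdrei/(9\celf^2\lambda(\Delta_{0,1}))$ and chaining with Lemma~\ref{lemma-estimate-H} yields the claim.

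The step I expect to be the main obstacle is the argument that the rigidity rotation $R$ is forced to be close to $\id$, i.e.\ making the mean-value computation $\fint_{U_N}l^{-1}\nabla\hat\omega=\id$ fully rigorous. One has to check carefully that the piecewise-affine map $\hat\omega$ on the torus-like box, defined via the averaged values at defects, still satisfies $\hat\omega(x+Nz)=\hat\omega(x)+lNz$ for $z\in\{1,\tau\}$ — this needs condition $(\Omega2)$ so that the averaging formula at a defect only involves present particles and respects periodicity — and then that the integral of the gradient over the fundamental domain depends only on these boundary increments, which is a divergence-theorem/telescoping computation but must be done component-wise and with attention to the non-rectangular shape of $\Lambda_{1N}$. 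Everything else (the equivalence of norms on $2\times2$ matrices, absorbing $l\approx 1$ into constants, and the scaling invariance of the Friesecke constant) is routine given the results already established.
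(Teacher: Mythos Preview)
Your proposal is correct and follows essentially the same route as the paper: Lemma~\ref{lemma-estimate-H}, then the Friesecke--James--M\"uller rigidity on $U_N$ with the scale-invariant constant $\celf$, then periodicity to replace the rigidity rotation $R$ by $\id$. The only difference is in the last step: the paper observes that $\sigma_\omega=l^{-1}\hat\omega-\mathrm{id}$ is periodic, so $\nabla\sigma_\omega=l^{-1}\nabla\hat\omega-\id$ is $L^2$-orthogonal to constants, and then simply expands
\[
\|l^{-1}\nabla\hat\omega-R\|_{L^2(U_N)}^2
=\|l^{-1}\nabla\hat\omega-\id\|_{L^2(U_N)}^2
+2\langle l^{-1}\nabla\hat\omega-\id,\id-R\rangle
+\|\id-R\|_{L^2(U_N)}^2
\ge \|l^{-1}\nabla\hat\omega-\id\|_{L^2(U_N)}^2,
\]
since the cross term vanishes and the last term is nonnegative. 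This Pythagorean argument is the same fact you use (mean of $l^{-1}\nabla\hat\omega$ equals $\id$) but packaged more directly, giving constant $1$ instead of your $9$; your triangle-inequality detour is a valid alternative.
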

\begin{proof}
Recall that all triangles in $\T_N$ have the same Lebesgue measure. 
Using this and Lemma \ref{lemma-estimate-H}, we get 
\begin{align}
A_{m,l,N}(\omega)-(m-\cneun)|\defects(\omega)|
\ge & \cdrei \sum_{\Delta\in\T_N} 
\dist(l^{-1}\nabla\hat\omega(\Delta),\SO(2))^2 \nonumber\\
= & \cdrei  \lambda(\Delta_{0,1})^{-1} \sum_{\Delta\in\T_N}
\lambda(\Delta) \dist(l^{-1}\nabla\hat\omega(\Delta),\SO(2))^2
\nonumber\\
= &\cdrei  \lambda(\Delta_{0,1})^{-1} 
\| \dist(l^{-1}\nabla \hat\omega,\SO(2))\|^2_{L^2(U_N)}. 
\label{estA1}
\end{align}
By Theorem \ref{thm-friesecke-et-al} and Remark 
\ref{remark-friesecke-const} there exists a random rotation 
$R_N(\omega)\in\SO(2)$ such that one has 
\begin{align}
\| \dist(l^{-1}\nabla\hat\omega,\SO(2))\|^2_{L^2(U_N)}
\ge\celf^{-1} \| l^{-1}\nabla\hat\omega-R_N(\omega)\|^2_{L^2(U_N)}.
\label{estA2}
\end{align}
Combining (\ref{estA1}) and (\ref{estA2}) yields 
\begin{align}
& A_{m,l,N}(\omega)-(m-\cneun)|\defects(\omega)|
\ge  \czehn  \| l^{-1}\nabla\hat\omega-R_N(\omega)\|^2_{L^2(U_N)}\nonumber\\
& =  \czehn \left( \| l^{-1}\nabla\hat\omega-\id\|^2_{L^2(U_N)} 
+2\langle l^{-1}\nabla\hat\omega-\id, \id-R_N(\omega)\rangle
+\|\id-R_N(\omega)\|^2_{L^2(U_N)} \right) \cr
& \ge  \czehn \left( \| l^{-1}\nabla\hat\omega-\id\|^2_{L^2(U_N)} 
+2\langle l^{-1}\nabla\hat\omega-\id, \id-R_N(\omega)\rangle
 \right)
\label{estA3}
\end{align}
with a constant $\czehn>0$. We introduce the periodic function
$\sigma_\omega(x):=l^{-1}\hat\omega(x)-x$ for $x\in\C$. Its derivative equals
$\nabla\sigma_\omega=l^{-1}\nabla\hat\omega-\id$. 
By the fundamental theorem of calculus, derivatives of periodic 
functions are orthogonal in $L^2$ to any constant function. Thus, 
the scalar product on the right-hand side in (\ref{estA3}) vanishes, and 
we get the claim.
\end{proof}

\begin{lemma}
\label{lemma-est-expect-part2}
There exists a uniform constant $\csiebzehn$ such that the following holds:
For all $\delta>0$, there exist $\czwoelf>0$ and $\cdreizehn\in\R$ 
such that for any $\beta\ge\csiebzehn$, $m\ge m_0:=\cneun+1$ (with $\cneun$ as in 
Lemma \ref{lemma-estimate-H}) and any $N\ge5$,
one has 
\begin{align}
\label{upper-bound-expec-A-delta}
\frac{1}{|\T_N|} 
E_{P_{\beta,m,N}}[ A_{m,l,N}]
\le \frac{\delta}{2} + \czwoelf 
\exp\left\{ |I_N| \left(-\frac{\beta\delta}{8} -\log\beta + \cdreizehn
\right)\right\}. 
\end{align}
As a consequence, 
\begin{align}
\label{upper-bound-limsup}
\limsup_{\beta\to\infty}\sup_{N\ge 5}\sup_{m\ge m_0}
\frac{1}{|\T_N|}E_{P_{\beta,m,N}}[ A_{m,l,N}(\omega)]\le 0. 
\end{align}
\end{lemma}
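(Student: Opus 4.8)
The plan is to bound $E_{P_{\beta,m,N}}[A_{m,l,N}]$ from above by splitting the expectation over the event that $A_{m,l,N}$ is small versus large, and to control the large-deviation contribution using the lower bound on the partition sum from Lemma \ref{lemma-lower-bound-Z}. First I would recall from Lemma \ref{lemma-estimate-H} that $A_{m,l,N}(\omega)\ge(m-\cneun)|\defects(\omega)|\ge|\defects(\omega)|\ge 0$ for $m\ge m_0=\cneun+1$, so that $A_{m,l,N}$ is a nonnegative random variable on $\Omega_{l,N}$; this already makes the displays meaningful. The idea is then to write, for a threshold $t>0$ to be chosen proportional to $|\T_N|\delta$,
\begin{align}
E_{P_{\beta,m,N}}[A_{m,l,N}]
= E_{P_{\beta,m,N}}[A_{m,l,N}1_{\{A_{m,l,N}\le t\}}]
+ E_{P_{\beta,m,N}}[A_{m,l,N}1_{\{A_{m,l,N}> t\}}].
\end{align}
The first term is at most $t$, which with $t=|\T_N|\delta/2$ contributes the $\delta/2$ per triangle in \eqref{upper-bound-expec-A-delta}.

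For the second term, the strategy is a standard entropy-versus-energy estimate: on the event $\{A_{m,l,N}>t\}$ the Boltzmann weight $e^{-\beta H_{m,N}(\omega)} = e^{-\beta A_{m,l,N}(\omega)}e^{-\beta H_{m,N}(\omega_l)}$ is exponentially small compared with the weight $e^{-\beta H_{m,N}(\omega_l)}$ already captured by $Z_{\beta,m,N}$ via Lemma \ref{lemma-lower-bound-Z}. Concretely, I would estimate
\begin{align}
E_{P_{\beta,m,N}}[A_{m,l,N}1_{\{A_{m,l,N}> t\}}]
= \frac{1}{Z_{\beta,m,N}}\int_{\{A_{m,l,N}>t\}} A_{m,l,N}\, e^{-\beta A_{m,l,N}}\,\mu_N(d\omega)\; e^{-\beta H_{m,N}(\omega_l)},
\end{align}
then use $A e^{-\beta A}\le A e^{-\beta t/2} e^{-\beta A/2}$ for $A>t$ together with the crude bound $A_{m,l,N}(\omega)\le \celf\,|I_N|$ (coming from $(\Omega 1)$: $V$ is bounded on $[1-\alpha,1+\alpha]$ so the potential part of $H_{m,N}$ is $O(|I_N|)$, and the chemical-potential part obeys $m|\defects(\omega)|\le A_{m,l,N}(\omega)+H_{m,N}(\omega_l)$-type control for $m\ge m_0$, so $A_{m,l,N}$ is at most linear in $|I_N|$ up to constants depending on $m$; one must be slightly careful to keep the $m$-dependence benign, but since $m\ge m_0$ only the upper tail of $m$ matters and that is absorbed because larger $m$ only suppresses defects). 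What remains is to bound $\int_{\Omega_{l,N}} e^{-\beta A_{m,l,N}/2}\mu_N(d\omega) = e^{\beta H_{m,N}(\omega_l)/2}\int e^{-\beta H_{m,N}/2}\mu_N(d\omega)$; the latter integral is finite and, using $\mu_N(\Omega_{l,N})<\infty$ with $|V|\le\csechs(V)$, is at most $e^{C|I_N|}$ for a constant $C$ depending on $V$ and $\alpha$ (and harmlessly on $m_0$). Dividing by $Z_{\beta,m,N}$ and inserting the lower bound from Lemma \ref{lemma-lower-bound-Z} cancels the $e^{-\beta H_{m,N}(\omega_l)}$ factors and leaves an expression of the form $\czwoelf\,|I_N|\exp\{|I_N|(-\beta\delta/8 - \log\beta + \cdreizehn)\}$ once $t=|\T_N|\delta/2\asymp|I_N|\delta$ is substituted and the $\log(\pi r^2)$ term from Lemma \ref{lemma-lower-bound-Z} is folded into $\cdreizehn$; the extra polynomial factor $|I_N|$ from the crude bound on $A_{m,l,N}$ is absorbed into the exponential by slightly enlarging $\cdreizehn$.

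For the consequence \eqref{upper-bound-limsup}: divide \eqref{upper-bound-expec-A-delta} by nothing further (it is already normalized by $|\T_N|$) and take $\limsup_{\beta\to\infty}$. For fixed $\delta$, the exponent $-\beta\delta/8-\log\beta+\cdreizehn$ is eventually negative for all $\beta$ large (uniformly in $N\ge5$ and $m\ge m_0$, since $\czwoelf,\cdreizehn$ do not depend on $N$ or $m$), so the second term tends to $0$ as $\beta\to\infty$ uniformly in $N$ and $m$; hence $\limsup_{\beta\to\infty}\sup_{N\ge5}\sup_{m\ge m_0}\frac{1}{|\T_N|}E_{P_{\beta,m,N}}[A_{m,l,N}]\le\delta/2$. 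Since $\delta>0$ is arbitrary, \eqref{upper-bound-limsup} follows.

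The main obstacle I anticipate is the bookkeeping of the $m$-dependence: one needs the constants $\czwoelf$, $\cdreizehn$, and $\csiebzehn$ to be uniform over $m\ge m_0$, and the naive bound $A_{m,l,N}(\omega)\le C m |I_N|$ grows with $m$. The resolution is that in the integral $\int e^{-\beta A_{m,l,N}/2}\mu_N(d\omega)$, the defect-counting term $(m-\cneun)|\defects(\omega)|$ in $A_{m,l,N}$ has the \emph{right sign} to help: raising $m$ only decreases this integral, so one may bound it by its value at $m=m_0$, which is $m$-free. The crude prefactor bound $A_{m,l,N}(\omega)\le m|I_N| + \text{(bounded potential part)}$ does carry an $m$; but $A e^{-\beta A/4}\le \frac{4}{e\beta}e^{-\beta A/8}\cdot(\text{stuff})$ — more simply, choosing the split threshold and using $Ae^{-\beta A/2}\le\frac{2}{e\beta}e^{-\beta A/4}$ turns the linear-in-$A$ factor into a $1/\beta$ and a further exponential, and then the residual $e^{-\beta A/4}$ integral is again bounded at $m=m_0$ with the defect term only helping. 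Thus every appearance of $m$ is either killed by the favorable sign of the chemical-potential term or converted to a harmless $\beta^{-1}$, and uniformity in $m\ge m_0$ is preserved. A secondary (routine) point is verifying the continuity/boundedness input needed to get $A_{m,l,N}$ pointwise finite and to bound $\int e^{-\beta H_{m,N}/2}\mu_N$ — but this is immediate from $(\Omega 1)$ and $\mu_N(\Omega_{l,N})<\infty$.
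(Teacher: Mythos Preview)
Your overall architecture --- split $E[A_{m,l,N}]$ at the threshold $|I_N|\delta$, bound the small part trivially, and control the large-deviation part against the lower bound on $Z_{\beta,m,N}$ from Lemma~\ref{lemma-lower-bound-Z} --- is exactly what the paper does. The substantive gap is in your bound on the entropy integral $\int_{\Omega_{l,N}} e^{-\beta A_{m,l,N}/2}\,\mu_N(d\omega)$. Going via $e^{\beta H_{m,N}(\omega_l)/2}\int e^{-\beta H_{m,N}/2}\,\mu_N$ and $|V|\le\csechs$ does \emph{not} give a $\beta$-free bound of the form $e^{C|I_N|}$: from $|V|\le\csechs$ one only gets $H_{m,N}\ge -3\csechs|I_N|$, hence $\int e^{-\beta H_{m,N}/2}\,\mu_N\le e^{3\beta\csechs|I_N|/2}\mu_N(\Omega_{l,N})$, and after multiplying by $e^{\beta H_{m,N}(\omega_l)/2}=e^{3\beta V(l)|I_N|/2}$ the result is $e^{O(\beta)|I_N|}$, which for small $\delta$ swamps the gain $e^{-\beta\delta|I_N|/2}$ from the threshold. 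The easy repair is to use what you already established, $A_{m,l,N}\ge 0$, to bound the integral directly by $\mu_N(\Omega_{l,N})$, and then to bound the latter by $e^{C|I_N|}$ via a spanning-tree argument on $I_N\setminus\defects(\omega)$ using ($\Omega 1$) and ($\Omega 2$).

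Even with that repair, your argument produces only a single factor $\beta^{-1}$ (from $Ae^{-\beta A/2}\le 2/(e\beta)$), not $\beta^{-|I_N|}$; so you do not directly get the term $-\log\beta$ with multiplicity $|I_N|$ in \eqref{upper-bound-expec-A-delta} as you claim. The paper obtains that factor by a genuinely different step: it invokes Lemma~\ref{lower-bound-A} (hence the global rigidity estimate) to bound $A_{m,l,N}$ below by $\czehn\|l^{-1}\nabla\hat\omega-\id\|^2_{L^2(U_N)}+(m-\cneun)|\defects(\omega)|$, rewrites the gradient term as a sum of $\|\sigma_\omega(x)-\sigma_\omega(y)\|^2$ over edges, partitions over defect sets, and evaluates a spanning-tree Gaussian integral to get $\int e^{-\beta A/4}\,\mu_N\le 2(\csechzehn/\beta)^{|I_N|-1}\lambda(\Lambda_{lN})$. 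Your corrected cruder route still yields \eqref{upper-bound-limsup} (and, after the elementary trade $\beta\delta/4\ge\log\beta-\log(4/\delta)+1$, even the stated form \eqref{upper-bound-expec-A-delta}), but the assertion that the $-\log\beta$ appears in the exponent with the factor $|I_N|$ directly from your estimate is incorrect bookkeeping.
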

\begin{proof}
Let $\delta>0$. We calculate  
\begin{align}
& Z_{\beta,m,N} E_{P_{\beta,m,N}}[ A_{m,l,N}(\omega)]
=
\int_{\Omega_{l,N}}A_{m,l,N}(\omega) e^{-\beta H_{m,N}(\omega)} 
\, \mu_N(d\omega)\nonumber\\
= & e^{-\beta H_{m,N}(\omega_l)} 
\int_{\Omega_{l,N}}A_{m,l,N}(\omega) 
e^{-\beta A_{m,l,N}(\omega)} 
\, \mu_N(d\omega).
\label{upper-bound-A-11}
\end{align}
Next, we split the domain of integration into 
\begin{align}
\Omega_{l,N}^{>\delta}:=\{\omega\in\Omega_{l,N}:\;
A_{m,l,N}(\omega)>\delta |I_N|\} \quad\text{ and }\quad 
\Omega_{l,N}^{\le\delta}:=\Omega_{l,N}\setminus\Omega_{l,N}^{>\delta}.
\end{align}
For the latter domain, we estimate
\begin{align}
\int_{\Omega_{l,N}^{\le\delta}}A_{m,l,N}(\omega) 
e^{-\beta A_{m,l,N}(\omega)} 
\, \mu_N(d\omega)
\le & \delta |I_N| Z_{\beta,m,N} e^{\beta H_{m,N}(\omega_l)} \cr
= & \frac\delta2 |\T_N| Z_{\beta,m,N} e^{\beta H_{m,N}(\omega_l)} . 
\label{upper-bound-A-12}
\end{align}
For the remaining part, we first apply the inequality $xe^{-x}\le e^{-x/2}$ 
with $x=\beta A_{m,l,N}$, then we use the exponential Chebyshev inequality. 
This yields 
\begin{align}
& \int_{\Omega_{l,N}^{>\delta}}A_{m,l,N}(\omega) 
e^{-\beta A_{m,l,N}(\omega)} 
\, \mu_N(d\omega) 
\le \frac1\beta\int_{\Omega_{l,N}^{>\delta}} e^{-\beta A_{m,l,N}(\omega)/2} 
\, \mu_N(d\omega) \nonumber\\
\le & \frac1\beta\int_{\Omega_{l,N}} e^{\beta(A_{m,l,N}(\omega)-\delta |I_N|)/4}
e^{-\beta A_{m,l,N}(\omega)/2} 
\, \mu_N(d\omega)\nonumber\\
= & \frac{e^{-\beta\delta |I_N|/4}}{\beta}
\int_{\Omega_{l,N}} e^{-\beta A_{m,l,N}(\omega)/4}
\, \mu_N(d\omega).
\label{upper-bound-A-13}
\end{align}
Lemma \ref{lower-bound-A} implies
\begin{align}
& \int_{\Omega_{l,N}} e^{-\beta A_{m,l,N}(\omega)/4}
\, \mu_N(d\omega) \nonumber\\
\le &  
\int_{\Omega_{l,N}} 
\exp\left\{-\beta \frac{\czehn}{4} \| l^{-1}\nabla\hat\omega-\id\|^2_{L^2(U_N)}
-\frac{\beta}{4}(m-\cneun)|\defects(\omega)|\right\}
\, \mu_N(d\omega).
\label{upper-bound-A-1}
\end{align}
We use again the notation $\sigma_\omega(x):=l^{-1}\hat\omega(x)-x$ 
for $x\in\C$:
\begin{align}
\| l^{-1}\nabla\hat\omega-\id\|^2_{L^2(U_N)}
= & \| \nabla\sigma_\omega \|^2_{L^2(U_N)} \nonumber\\
= & \sum_{\Delta\in\T_N}\| \nabla\sigma_\omega \|^2_{L^2(\Delta)}
=\lambda(\Delta_{0,1})\sum_{\Delta\in\T_N} 
\|\nabla\sigma_\omega(\Delta)\|^2 . 
\label{ident-sigma}
\end{align}
Take an equilateral triangle $\Delta\in\T_N$ 
with corner points $A$, $B$, and $C$. 
We claim that 
\begin{align}
\|\nabla\sigma_\omega(\Delta)\|^2 \ge \cvierzehn 
\left( \|\sigma_\omega(A)-\sigma_\omega(B)\|^2 +
\|\sigma_\omega(B)-\sigma_\omega(C)\|^2 +
\|\sigma_\omega(C)-\sigma_\omega(A)\|^2 \right)
\label{lower-bound-sigma}
\end{align}
with a constant $\cvierzehn>0$ not depending on the choice of $\Delta$.
Since $\sigma_\omega$ is affine linear on $\Delta$, the claim reduces to
showing for any matrix $M\in\R^{2\times 2}$
\begin{align}
\|M\|^2 \ge \cvierzehn 
\left( \|MA-MB\|^2 + \|MB-MC\|^2 + \|MC-MA\|^2 \right).
\label{lower-bound-M}
\end{align}
Note that translating $\Delta$ does not change the claim. 
Thus, we can reduce the claim further to the special cases $\Delta=\Delta_{0,1}$ 
and $\Delta=\tau\Delta_{0,1}$. 
Since both sides in (\ref{lower-bound-M}) are a square of a 
matrix norm on $2\times 2$-matrices, and all such norms  
are equivalent, the claim (\ref{lower-bound-sigma}) follows. 

We bound \eqref{upper-bound-A-1} further from above using \eqref{ident-sigma} and \eqref{lower-bound-sigma} to obtain the upper bound 
\begin{align}
\int_{\Omega_{l,N}} 
\exp\Big\{-\beta \cfuenfzehn \sum_{\substack{x\in I_N,\, y\in A_2\\x\sim y}}
\|\sigma_\omega(x)-\sigma_\omega(y)\|^2
-\frac{\beta}{4}(m-\cneun)|\defects(\omega)|\Big\}
\, \mu_N(d\omega)
\label{upper-bound-A-2}
\end{align}
with a uniform constant $\cfuenfzehn>0$. 
By partitioning $\Omega_{l,N}$ according to the set $D\subset I_N$ of defects, \eqref{upper-bound-A-2} is equal to
\begin{align}
\sum_{D\subset I_N}
e^{-{\beta}\left(m-\cneun\right)\left|D\right|/4}
\int_{\{\defects(\omega)=D\}} 
\exp\Big\{-\beta \cfuenfzehn \sum_{\substack{x\in I_N,\, y\in A_2\\x\sim y}}
\|\sigma_\omega(x)-\sigma_\omega(y)\|^2\Big\}
\, \mu_N(d\omega).
\label{upper-bound-A-3}
\end{align}
By $(\Omega2)$, defects are isolated in $I_N$. 
Whence, for each set $D$ of defects, we can choose a spanning tree $\mathcal S$ of $I_N\setminus \defects(\omega)$. 
We bound \eqref{upper-bound-A-3} from above by restricting the sum of pairs $x\sim y$ to edges $\{x,y\}$ of $\mathcal S$,
\begin{align}
& \int_{\Omega_{l,N}} e^{-\beta A_{m,l,N}(\omega)/4}
\, \mu_N(d\omega) \nonumber\\
\le &  
\sum_{D\subset I_N}
e^{-{\beta}\left(m-\cneun\right)\left|D\right|/4}
\int_{\{\defects(\omega)=D\}} 
\exp\Big\{-\beta \cfuenfzehn \sum_{\{x,y\}\in \mathcal S}
\|\sigma_\omega(x)-\sigma_\omega(y)\|^2\Big\}
\, \mu_N(d\omega)\nonumber\\
=&\sum_{D\subset I_N} e^{-{\beta}\left(m-\cneun\right)\left|D\right|/4}
\left(\int_{\R^2}e^{-\beta \cfuenfzehn l^{-2}\|u\|^2}\lambda(du)\right)^{|I_N|-|D|-1}
\lambda(\Lambda_{lN}),
\label{upper-bound-A-4}
\end{align}
where the factor $\lambda(\Lambda_{lN})$ stems from integrating the root 
of $\mathcal S$ over the set of representatives $\Lambda_{lN}$ 
of $\C/lNA_2$;
a Gaussian integral arises for each of the $|I_N|-|D|-1$ edges of 
$\mathcal S$. 
There exists a uniform constant $\csechzehn>0$ such that 
\begin{align}
\int_{\R^2}e^{-\beta \cfuenfzehn l^{-2}\|u\|^2}\lambda(du)
\le \frac\csechzehn{2\beta}, 
\end{align}
and hence 
\begin{align}
& \int_{\Omega_{l,N}} e^{-\beta A_{m,l,N}(\omega)/4}
\, \mu_N(d\omega) \nonumber\\
\le &  
\left(\frac\csechzehn{2\beta}\right)^{|I_N|-1}
\lambda(\Lambda_{lN}) 
\sum_{D\subset I_N}
\exp\left\{-\left(\frac{\beta}{4}\left(m-\cneun\right)+\log\left(\frac\csechzehn{2\beta}\right)\right)\left|D\right|\right\}
.
\end{align}
Take a uniform constant $\csiebzehn$ so large that for all 
$\beta\ge\csiebzehn$ and $m\ge m_0=\cneun+1$
one has 
\begin{align}
\frac{\beta}{4}\left(m-\cneun\right)+\log\left(\frac\csechzehn{2\beta}\right)
\ge 0. 
\end{align}
For these $\beta$ and $m$, we get 
\begin{align}
&\sum_{D\subset I_N}
\exp\left\{-\left(\frac{\beta}{4}\left(m-\cneun\right)+\log\left(\frac\csechzehn{2\beta}\right)\right)\left|D\right|\right\} \nonumber \\
\le&\left(1+\exp\left\{-\frac{\beta}{4}\left(m-\cneun\right)-\log\left(\frac\csechzehn{2\beta}\right)\right\}\right)^{|I_N|}
\le 2^{|I_N|}.
\end{align}
Thus, 
\begin{align}
\int_{\Omega_{l,N}} e^{-\beta A_{m,l,N}(\omega)/4}
\, \mu_N(d\omega) 
\le 
2\left(\frac\csechzehn{\beta}\right)^{|I_N|-1}
\lambda(\Lambda_{lN}).
\label{upper-bound-A-14}
\end{align}
We combine \eqref{upper-bound-A-11} with \eqref{upper-bound-A-12}, \eqref{upper-bound-A-13}, and \eqref{upper-bound-A-14} to obtain 
\begin{align}
E_{P_{\beta,m,N}}[ A_{m,l,N}(\omega)]
\le 
\frac\delta2 |\T_N|+
\frac{2}{\csechzehn}\frac{e^{-\beta\delta |I_N|/4}}{ Z_{\beta,m,N}}
\left(\frac{\csechzehn}{\beta}\right)^{|I_N|}
\lambda(\Lambda_{lN}) e^{-\beta H_{m,N}(\omega_l)}
.
\end{align}
Next, we insert the lower bound for the partition sum from 
Lemma \ref{lemma-lower-bound-Z} with $\varepsilon:=\delta/24$ and 
$r=r(\varepsilon)$. Using also $|\T_N|\ge 1$, we obtain
\begin{align}
E_{P_{\beta,m,N}}[ A_{m,l,N}(\omega)]
\le & \frac\delta2 |\T_N|+ 
\frac{2}{\csechzehn}\frac{e^{-\beta\delta |I_N|/4}}{ 
\pi^{-1} r^{-2}
e^{-|I_N|(3\beta\varepsilon-\log(\pi r^2))}}
\left(\frac{\csechzehn}{\beta}\right)^{|I_N|}
\cr
\le & \frac\delta2 |\T_N|+ \czwoelf |\T_N|
\exp\left\{ |I_N| \left( 
- \frac{\beta\delta}{8} - \log\beta + \cdreizehn  \right)\right\}
\end{align}
with constants $\czwoelf>0$ and $\cdreizehn\in\R$ depending on $\delta$. 
This yields Claim \eqref{upper-bound-expec-A-delta}. 

For any given $\delta>0$, 
$-\beta\delta/8 -\log\beta + \cdreizehn(\delta)\to -\infty$ as 
$\beta\to\infty$. Consequently, Claim \eqref{upper-bound-limsup} follows. 
\end{proof}

\subsection{Proof of the main results}

\noindent
\begin{proof}[Proof of Theorem \ref{variant-main-thm}]
The claim follows if we show 
\begin{align}
\label{main-reduced2}
\lim_{\beta\to\infty}
\sup_{N\ge 5}\sup_{m\ge m_0}\frac{1}{|\T_N|}\sum_{\Delta\in\T_N} 
E_{P_{\beta,m,N}}[|\nabla\hat\omega(\Delta)-l\id|^2]
=0. 
\end{align}
This can be seen as follows: 
For $x\in A_2$, let $\theta_x\colon\Omega_{l,N}\to\Omega_{l,N}$, 
$\theta_x\omega(y)=\omega(y-x)$ for $y\in A_2$, denote the shift operator. 
For any $x\in A_2$, 
$P_{\beta,m,N}$ is invariant under $\theta_x$. Consequently, for any 
$\tilde\Delta\in\T_N$ and $x\in I_N$, we get 
\begin{align}
\label{delta-shift}
E_{P_{\beta,m,N}}[|\nabla\hat\omega(\tilde\Delta+x)-l\id|^2]
= & 
E_{P_{\beta,m,N}}[|\nabla\hat\omega(\tilde\Delta)-l\id|^2]. 
\end{align}
For any $\Delta_1\in\T_N$, the set 
$\big\{ \Delta=\tilde\Delta+x : 
\tilde\Delta\in\{\Delta_1,\tau\Delta_1\}, x\in I_N \big\}$
modulo translations by elements of $NA_2$ runs over all 
elements of $\T_N$. 
Using this first and then using (\ref{delta-shift}) yields 
\begin{align}
\sum_{\Delta\in\T_N}E_{P_{\beta,m,N}}[|\nabla\hat\omega(\Delta)-l\id|^2]
= & \sum_{\tilde\Delta\in\{\Delta_1,\tau\Delta_1\}}
\sum_{x\in I_N} E_{P_{\beta,m,N}}[|\nabla\hat\omega(\tilde\Delta+x)-l\id|^2]
\nonumber\\
= & \sum_{\tilde\Delta\in\{\Delta_1,\tau\Delta_1\}}
\sum_{x\in I_N} 
E_{P_{\beta,m,N}}[|\nabla\hat\omega(\tilde\Delta)-l\id|^2]
\nonumber\\
\ge & \sum_{x\in I_N} 
E_{P_{\beta,m,N}}[|\nabla\hat\omega(\Delta_1)-l\id|^2]
\cr 
= & 
|I_N| E_{P_{\beta,m,N}}[|\nabla\hat\omega(\Delta_1)-l\id|^2]
. 
\end{align}
Since $2|I_N|=|\T_N|$, (\ref{main-reduced2}) implies 
Claim (\ref{claim-variant-main-thm}). 

To prove (\ref{main-reduced2}), we consider  
\begin{align}
l^{-2}\sum_{\Delta\in\T_N}\lambda(\Delta)
E_{P_{\beta,m,N}}[|\nabla\hat\omega(\Delta)-l\id|^2]%\nonumber\\
= & \sum_{\Delta\in\T_N}\lambda(\Delta)
E_{P_{\beta,m,N}}[|l^{-1}\nabla\hat\omega(\Delta)-\id|^2]\nonumber\\
= & E_{P_{\beta,m,N}}[\| l^{-1}\nabla\hat\omega-\id\|^2_{L^2(U_N)}]. 
\end{align}
Lemma \ref{lower-bound-A} implies 
\begin{align}
0\le  l^{-2}\sum_{\Delta\in\T_N}\lambda(\Delta)
E_{P_{\beta,m,N}}[|\nabla\hat\omega(\Delta)-l\id|^2]
\le \czehn^{-1} E_{P_{\beta,m,N}}[A_{m,l,N}(\omega)]
\label{align-exp-nabla-minus-RN}
\end{align}
for $m\ge m_0=\cneun+1$.
Note that the middle term in (\ref{align-exp-nabla-minus-RN}) equals
up to a constant 
$\sum_{\Delta\in\T_N}E_{P_{\beta,m,N}}[|\nabla\hat\omega(\Delta)-l\id|^2]$ 
because $\lambda(\Delta)$ is constant for $\Delta\in\T_N$. 
The claim follows from Lemma \ref{lemma-est-expect-part2}. 
\end{proof}

\medskip\noindent\begin{proof}[Proof of Theorem \ref{main-thm}]
For any equilateral triangle with side length 1 having corner points 
$A_1,A_2,A_3\in\R^2$, the map 
\begin{align}
\R^{2\times 2}\ni M \mapsto 
\max\{ \| M(A_2-A_1)\|, \| M(A_3-A_2)\|, \| M(A_1-A_3)\|\} 
\end{align}
is a matrix norm and hence equivalent to any other matrix norm on 
$\R^{2\times 2}$. Thus Theorem \ref{main-thm} follows from Theorem 
\ref{variant-main-thm}.
\end{proof}

\paragraph{Acknowledgements.} 
The research of MH is supported by the Netherlands Organization for Scientific Research (NWO). 

\bibliographystyle{alpha}

%\bibliography{gibbs}

\end{document}